\newtheorem{definition}{Definition}
\newtheorem{theorem}{Theorem}
\newtheorem{lemma}[theorem]{Lemma}
\begin{document}


\title{Stability for \(t\)-intersecting families of permutations}

\author{David Ellis}
\maketitle

\begin{abstract}
A family of permutations \(\mathcal{A} \subset S_{n}\) is said to be \(t\)-\textit{intersecting} if any two permutations in \(\mathcal{A}\) agree on at least \(t\) points, i.e. for any \(\sigma, \pi \in \mathcal{A}\), \(|\{i \in [n]: \sigma(i)=\pi(i)\}| \geq t\). It was proved by Friedgut, Pilpel and the author in \cite{friedgutpilpel} that for \(n\) sufficiently large depending on \(t\), a \(t\)-intersecting family \(\mathcal{A} \subset S_{n}\) has size at most \((n-t)!\), with equality only if \(\mathcal{A}\) is a coset of the stabilizer of \(t\) points (or `\(t\)-coset' for short), proving a conjecture of Deza and Frankl. Here, we first obtain a rough stability result for \(t\)-intersecting families of permutations, namely that for any \(t \in \mathbb{N}\) and any positive constant \(c\), if \(\mathcal{A} \subset S_{n}\) is a \(t\)-intersecting family of permutations of size at least \(c(n-t)!\), then there exists a \(t\)-coset containing all but at most a \(O(1/n)\)-fraction of \(\mathcal{A}\). We use this to prove an exact stability result: for \(n\) sufficiently large depending on \(t\), if \(\mathcal{A} \subset S_{n}\) is a \(t\)-intersecting family which is not contained within a \(t\)-coset, then \(\mathcal{A}\) is at most as large as the family
\begin{eqnarray*}
\mathcal{D} & = & \{\sigma \in S_{n}:\ \sigma(i)=i\ \forall i \leq t,\ \sigma(j)=j\ \textrm{for some}\ j > t+1\}\\
&& \cup \{(1\ t+1),(2\ t+1),\ldots,(t \ t+1)\}
\end{eqnarray*}
which has size \((1-1/e+o(1))(n-t)!\). Moreover, if \(\mathcal{A}\) is the same size as \(\mathcal{D}\) then it must be a `double translate' of \(\mathcal{D}\), meaning that there exist \(\pi,\tau \in S_{n}\) such that \(\mathcal{A}=\pi \mathcal{D} \tau\). The \(t=1\) case of this was a conjecture of Cameron and Ku and was proved by the author in \cite{cameronkupaper}. We build on the methods of \cite{cameronkupaper}, but the representation theory of \(S_{n}\) and the combinatorial arguments are more involved. We also obtain an analogous result for \(t\)-intersecting families in the alternating group \(A_{n}\).
\end{abstract}

\section{Introduction}
We work first on the symmetric group \(S_{n}\), the group of all permutations of \(\{1,2,\ldots,n\} = [n]\). A family of permutations \(\mathcal{A} \subset S_{n}\) is said to be \(t\)-\textit{intersecting} if any two permutations in \(\mathcal{A}\) agree on at least \(t\) points, i.e. for any \(\sigma, \pi \in \mathcal{A}\), \(|\{i \in [n]: \sigma(i)=\pi(i)\}| \geq t\). Deza and Frankl \cite{dezafrankl} conjectured that for \(n\) sufficiently large depending on \(t\), a \(t\)-intersecting family \(\mathcal{A} \subset S_{n}\) has size at most \((n-t)!\); this became known as the Deza-Frankl conjecture. It was proved in 2008 by Friedgut, Pilpel and the author in \cite{friedgutpilpel} using eigenvalue techniques and the representation theory of the symmetric group; it was also shown in \cite{friedgutpilpel} that equality holds only if \(\mathcal{A}\) is a coset of the stabilizer of \(t\) points (or `\(t\)-coset' for short). In this paper, we will first prove a rough stability result for \(t\)-intersecting families of permutations. Namely, we show that for any fixed \(t \in \mathbb{N}\) and  \(c>0\), if \(\mathcal{A} \subset S_{n}\) is a \(t\)-intersecting family of size at least \(c (n-t)!\), then there exists a \(t\)-coset \(\mathcal{C}\) such that \(|\mathcal{A} \setminus \mathcal{C}| \leq \Theta((n-t-1)!)\), i.e. \(\mathcal{C}\) contains all but at most a \(O(1/n)\)-fraction of \(\mathcal{A}\).

We then use some additional combinatorial arguments to prove an exact stability result: for \(n\) sufficiently large depending on \(t\), if \(\mathcal{A} \subset S_{n}\) is a \(t\)-intersecting family which is not contained within a \(t\)-coset, then \(\mathcal{A}\) is at most as large as the family
\begin{eqnarray*}
\mathcal{D} & = & \{\sigma \in S_{n}:\ \sigma(i)=i\ \forall i \leq t,\ \sigma(j)=j\ \textrm{for some}\ j > t+1\}\\
&& \cup \{(1\ t+1),(2\ t+1),\ldots,(t \ t+1)\}
\end{eqnarray*}
which has size \((1-1/e+o(1))(n-t)!\). Moreover, if \(\mathcal{A}\) is the same size as \(\mathcal{D}\), then it must be a `double translate' of \(\mathcal{D}\), meaning that there exist \(\pi,\tau \in S_{n}\) such that \(\mathcal{A}=\pi \mathcal{D} \tau\). Note that if \(\mathcal{F} \subset S_{n}\), any double translate of \(\mathcal{F}\) has the same size as \(\mathcal{F}\), is \(t\)-intersecting iff \(\mathcal{F}\) is and is contained within a \(t\)-coset of \(S_{n}\) iff \(\mathcal{F}\) is; this will be our notion of `isomorphism'.

In other words, if we demand that our \(t\)-intersecting family \(\mathcal{A} \subset S_{n}\) is not contained within a \(t\)-coset of \(S_{n}\), then it is best to take \(\mathcal{A}\) such that all but \(t\) of its permutations are contained within some \(t\)-coset.

One may compare this with the situation for \(t\)-intersecting families of \(r\)-sets. We say a family \(\mathcal{A} \subset [n]^{(r)}\) of \(r\)-element subsets of \([n]\) is
\(t\)-\textit{intersecting} if any two of its sets contain at least \(t\) elements in common, i.e. \(|x \cap y| \geq t\) for any \(x,y \in \mathcal{A}\). Wilson \cite{wilson} proved using an eigenvalue technique that provided \(n \geq (t+1)(r-t+1)\), a \(t\)-intersecting family \(\mathcal{A} \subset [n]^{(r)}\) has size at most \({n-t \choose r-t}\), and that for \(n > (t+1)(r-t+1)\), equality holds only if \(\mathcal{A}\) consists of all \(r\)-sets containing some fixed \(t\)-set. Later, Ahlswede and Khachatrian \cite{ahslwedekhachatrian} characterized the \(t\)-intersecting families of maximum size in \([n]^{(r)}\) for all values of \(t,r\) and \(n\) using entirely combinatorial methods based on left-compression. They also proved that for \(n > (t+1)(r-t+1)\), if \(\mathcal{A} \subset [n]^{(r)}\) is \(t\)-intersecting and \textit{non-trivial}, meaning that there is no \(t\)-set contained in all of its members, then \(\mathcal{A}\) is at most as large as the family
\[\{x \in [n]^{(r)}:\ [t] \subset x,\ x \cap \{t+1,\ldots,r+1\} \neq \emptyset\} \cup \{[r+1] \setminus \{i\}:\ i \in [t]\}\]
if \(r > 2t+1\), and at most as large as the family  
\[\{x \in [n]^{(r)}: |x \cap [t+2]| \geq t+1\}\]
if \(r \leq 2t+1\). This had been proved under the assumption \(n \geq n_{1}(r,t)\) by Frankl \cite{frankl} in 1978. Note that the first family above is `almost trivial', and is the natural analogue of our family \(\mathcal{D}\).

The \(t=1\) case of our result was a conjecture of Cameron and Ku and was proved by the author in \cite{cameronkupaper}. We build on the methods of \cite{cameronkupaper}, but the representation theory of \(S_{n}\) and the combinatorial arguments required are more involved.

We also obtain analogous results for \(t\)-intersecting families of permutations in the alternating group \(A_{n}\). We use the methods of \cite{friedgutpilpel} to show that for \(n\) sufficiently large depending on \(t\), if \(\mathcal{A} \subset A_{n}\) is \(t\)-intersecting, then \(|\mathcal{A}|\leq (n-t)!/2\). Interestingly, it does not seem possible to use the methods of \cite{friedgutpilpel} to show that equality holds only if \(\mathcal{A}\) is a coset of the stabilizer of \(t\) points. Instead, we deduce this from a stability result. Using the same techniques as for \(S_{n}\), we prove that if \(\mathcal{A} \subset A_{n}\) is \(t\)-intersecting but not contained within a \(t\)-coset, then it is at most as large as the family
\begin{eqnarray*}
\mathcal{E}& = &\{\sigma \in A_{n}: \sigma(i)=i\ \forall i \leq t,\ \sigma(j)=(n-1\ n)(j)\ \textrm{for some}\ j > t+1\} \\
&&\cup \{(1\ t+1)(n-1\ n),(2\ t+1)(n-1\ n),\ldots,(t \ t+1)(n-1\ n)\}
\end{eqnarray*}
which has size \((1-1/e+o(1))(n-t)!/2\); if \(\mathcal{A}\) is the same size as \(\mathcal{E}\), then it must be a double translate of \(\mathcal{E}\), meaning that \(\mathcal{A}=\pi \mathcal{E} \tau\) for some \(\pi,\tau \in A_{n}\).

\section{Background}
In \cite{friedgutpilpel}, in order to prove the Deza-Frankl conjecture, we constructed (for \(n\) sufficiently large depending on \(t\)) a weighted graph \(Y\) which was a real linear combination of Cayley graphs on \(S_{n}\) generated by conjugacy-classes of permutations with less than \(t\) fixed points, such that the matrix \(A\) of weights of \(Y\) had maximum eigenvalue \(1\) and minimum eigenvalue
\[\omega_{n,t} = -\frac{1}{n(n-1)\ldots(n-t+1)-1}\]
The \(1\)-eigenspace was the subspace of \(\mathbb{C}[S_{n}]\) consisting of the constant functions. The direct sum of the 1-eigenspace and the \(\omega_{n,t}\)-eigenspace was the subspace \(V_{t}\) of \(\mathbb{C}[S_{n}]\) spanned by the characteristic vectors of the \(t\)-cosets of \(S_{n}\). All other eigenvalues were \(O(|\omega_{n,t}|/n^{1/6})\); this can in fact be improved to \(O(|\omega_{n,t}|/n)\), but any bound of the form \(o(|\omega_{n,t}|)\) will suffice for our purposes. We then appealed to a weighted version of Hoffman's bound (Theorem 11 in \cite{friedgutpilpel}):

\begin{theorem}
\label{thm:weightedhoffman}
Let \(A\) be a real, symmetric, \(N \times N\) matrix with real eigenvalues \(\lambda_{1} \geq \lambda_{2} \geq \ldots \geq \lambda_{N}\) (where \(\lambda_{1} > 0\)), such that the all-1's vector \(\mathbf{f}\) is an eigenvector of \(A\) with eigenvalue \(\lambda_{1}\), i.e. all row and column sums of \(A\) equal \(\lambda_{1}\). Let \(X \subset [N]\) such that \(A_{x,y} = 0\) for any \(x,y \in X\). Let \(U\) be the direct sum of the subspace of constant vectors and the \(\lambda_{N}\)-eigenspace. Then
\[|X| \leq \frac{|\lambda_{N}|}{\lambda_{1}+|\lambda_{N}|}N\]
and equality holds only if the characteristic vector \(v_{X}\) lies in the subspace \(U\).
\end{theorem}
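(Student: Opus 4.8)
The plan is to run the classical Hoffman ratio-bound argument, adapted to the weighted setting by replacing the adjacency matrix of a graph with the weight matrix \(A\): one expands the characteristic vector \(v_X\) of \(X\) in an eigenbasis of \(A\) and plays off the Rayleigh quotient \(v_X^{T}Av_X\), which vanishes by the ``independence'' hypothesis, against the spectral bound coming from \(\lambda_N\). Concretely, I would first fix an orthonormal basis \(u_1,\ldots,u_N\) of eigenvectors of \(A\), with \(Au_i=\lambda_i u_i\) and \(u_1=\mathbf{f}/\sqrt{N}\); this is legitimate since \(A\) is real symmetric and \(\mathbf{f}\) is an eigenvector for the \emph{largest} eigenvalue \(\lambda_1\). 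Writing \(v=v_X=\sum_{i=1}^N c_i u_i\) and \(\alpha=|X|/N\), the facts that \(v\) has exactly \(|X|\) ones and \(\langle v,u_1\rangle=|X|/\sqrt{N}\) give \(c_1^2=\alpha^2 N\) and \(\sum_{i\ge 2}c_i^2=\|v\|^2-c_1^2=\alpha(1-\alpha)N\).

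Next I would use the hypothesis that \(A_{x,y}=0\) for all \(x,y\in X\) (in particular \(A_{x,x}=0\)), which says precisely that \(v^{T}Av=\sum_{x,y\in X}A_{x,y}=0\). On the other hand \(v^{T}Av=\sum_i \lambda_i c_i^2=\lambda_1 c_1^2+\sum_{i\ge 2}\lambda_i c_i^2\ge \lambda_1 c_1^2+\lambda_N\sum_{i\ge 2}c_i^2\), using \(\lambda_i\ge\lambda_N\) for all \(i\) together with \(\lambda_N\le 0\) (the latter because \(\lambda_N\le e_x^{T}Ae_x=A_{x,x}=0\) for any \(x\in X\); the case \(X=\emptyset\) is trivial). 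Substituting the values of \(c_1^2\) and \(\sum_{i\ge 2}c_i^2\) and dividing by \(\alpha N>0\) yields \(0\ge \alpha(\lambda_1-\lambda_N)+\lambda_N\), and since \(\lambda_1-\lambda_N=\lambda_1+|\lambda_N|>0\) this rearranges to \(\alpha\le |\lambda_N|/(\lambda_1+|\lambda_N|)\), i.e. the asserted bound \(|X|\le \tfrac{|\lambda_N|}{\lambda_1+|\lambda_N|}N\).

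Finally, for the equality characterization I would track when each inequality above is tight: equality forces \(\sum_{i\ge 2}\lambda_i c_i^2=\lambda_N\sum_{i\ge 2}c_i^2\), hence \(c_i=0\) for every \(i\ge 2\) with \(\lambda_i\ne\lambda_N\) (such indices exist, e.g. those with \(\lambda_i=\lambda_1>0>\lambda_N\)), so that \(v_X\) lies in the span of \(u_1\) and the \(\lambda_N\)-eigenvectors, which is exactly \(U\). I do not expect a genuine obstacle here — the argument is short and essentially forced; the only points needing a word of care are the degenerate cases (\(X\) empty, or \(\lambda_N=0\), in which case the inequality already forces \(X=\emptyset\)) and the legitimacy of splitting off the \(\lambda_1\)-term in the eigenvalue sum, both of which are immediate.
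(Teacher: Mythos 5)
Your proof is correct and is the standard Hoffman ratio-bound argument in the weighted setting; while the paper cites Theorem~\ref{thm:weightedhoffman} from \cite{friedgutpilpel} without reproducing its proof, the proof it does give of Lemma~\ref{lemma:stabilityhoffman} uses exactly the same eigenbasis decomposition of \(v_X\) and vanishing Rayleigh quotient \(v_X^{\top}Av_X=0\), of which your argument is the direct specialization (with only the \(\lambda_1\) and \(\lambda_N\) terms retained). Your handling of the degenerate cases (\(X=\emptyset\), \(\lambda_N=0\)) and the equality analysis are both sound.
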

Applying this to our weighted graph \(Y\) proved the Deza-Frankl conjecture:
\begin{theorem}
\label{thm:dezafranklconjecture}
For \(n\) sufficiently large depending on \(t\), a \(t\)-intersecting family \(\mathcal{A} \subset S_{n}\) has size \(|\mathcal{A}| \leq (n-t)!\).
\end{theorem}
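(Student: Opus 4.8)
The plan is to apply the weighted Hoffman bound (Theorem~\ref{thm:weightedhoffman}) to the weighted Cayley graph \(Y\) described above, taking as a black box the representation-theoretic construction of \(Y\) and the computation of its spectrum from \cite{friedgutpilpel}. Write \(A\) for the weight matrix of \(Y\); it is a real symmetric \(n!\times n!\) matrix with rows and columns indexed by \(S_{n}\), with largest eigenvalue \(\lambda_{1}=1\) (the constant vectors being the corresponding eigenvectors) and smallest eigenvalue \(\lambda_{n!}=\omega_{n,t}<0\), all other eigenvalues being strictly between these (indeed \(o(|\omega_{n,t}|)\) in absolute value, though that is not needed here).

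The first step is to observe that a \(t\)-intersecting family \(\mathcal{A}\subset S_{n}\) is an independent set in \(Y\), i.e. \(A_{x,y}=0\) for all \(x,y\in\mathcal{A}\). Indeed, for \(\sigma,\pi\in\mathcal{A}\) the permutation \(\sigma^{-1}\pi\) fixes precisely the points on which \(\sigma\) and \(\pi\) agree, of which there are at least \(t\); hence \(\sigma^{-1}\pi\) has at least \(t\) fixed points, so it lies in none of the conjugacy classes used to generate \(Y\) (each consisting of permutations with fewer than \(t\) fixed points), and therefore \(A_{\sigma,\pi}=0\). When \(\sigma=\pi\) this is simply the statement that \(Y\) has no loops, since the identity has \(n\geq t\) fixed points.

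The remaining step is to invoke Theorem~\ref{thm:weightedhoffman} with \(N=n!\), \(\lambda_{1}=1\) and \(\lambda_{N}=\omega_{n,t}\) and to evaluate the resulting bound. This gives
\[
|\mathcal{A}|\ \leq\ \frac{|\omega_{n,t}|}{1+|\omega_{n,t}|}\,n!.
\]
Setting \(m=n(n-1)\cdots(n-t+1)=n!/(n-t)!\), we have \(|\omega_{n,t}|=1/(m-1)\), so \(\frac{|\omega_{n,t}|}{1+|\omega_{n,t}|}=1/m\), and hence \(|\mathcal{A}|\leq n!/m=(n-t)!\), as claimed. The genuinely hard part of the argument is entirely upstream of this deduction: it is the construction of a weighted graph \(Y\) supported on the ``wrong'' (non-\(t\)-intersecting) pairs whose spectral gap is exactly tuned so that Hoffman's bound yields \((n-t)!\) with no slack, and this is precisely what \cite{friedgutpilpel} supplies; given that input, the proof above is a short calculation.
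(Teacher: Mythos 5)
Your proposal is correct and follows exactly the argument the paper (and \cite{friedgutpilpel}) use: a $t$-intersecting family $\mathcal{A}$ is an independent set in the weighted graph $Y$ (since $\sigma^{-1}\pi$ has at least $t$ fixed points whenever $\sigma,\pi\in\mathcal{A}$, so lies in none of the generating conjugacy classes), and plugging $\lambda_{1}=1$, $\lambda_{N}=\omega_{n,t}=-1/(m-1)$ with $m=n(n-1)\cdots(n-t+1)$ into Theorem~\ref{thm:weightedhoffman} yields the ratio bound $|\mathcal{A}|\leq n!/m=(n-t)!$. This is the same route as the paper; your arithmetic is right, and you correctly identify that the substantive content lives in the construction of $Y$, which both you and the paper treat as a black box from \cite{friedgutpilpel}.
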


Note that equality holds only if the characteristic vector \(v_{\mathcal{A}}\) of \(\mathcal{A}\) lies in the subspace \(V_{t}\) spanned by the characteristic vectors of the \(t\)-cosets of \(S_{n}\). It was proved in \cite{friedgutpilpel} that the Boolean functions in \(V_{t}\) are precisely the disjoint unions of \(t\)-cosets of \(S_{n}\), implying that equality holds only if \(\mathcal{A}\) is a \(t\)-coset of \(S_{n}\).

We also appealed to the following cross-independent weighted version of Hoffman's bound:

\begin{theorem}
\label{thm:weightedcrosshoffman}
Let \(A\) be as in Theorem \ref{thm:weightedhoffman}, and let \(\nu = \max(|\lambda_{2}|,|\lambda_{N}|)\). Let \(X, Y \subset [N]\) such that \(A_{x,y} = 0\) for any \(x \in X\) and \(y \in Y\). Let \(U\) be the direct sum of the subspace of constant vectors and the \(\pm \nu\)-eigenspaces. Then
\[|X||Y| \leq \left(\frac{\nu}{\lambda_{1}+\nu}N\right)^{2}\]
and equality holds only if \(|X|=|Y|\) and the characteristic vectors \(v_{X}\) and \(v_{Y}\) lie in the subspace \(U\).
\end{theorem}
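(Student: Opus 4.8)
The plan is to run the standard spectral argument (an ``expander mixing lemma'' for weighted graphs), parallel to the proof of Theorem~\ref{thm:weightedhoffman} but applied to the bilinear form \(v_X^{T} A v_Y\) rather than to a quadratic form. Write \(\mathbf{f}\) for the all-ones vector, let \(v_X, v_Y \in \{0,1\}^{N}\) be the characteristic vectors of \(X\) and \(Y\), and set \(\alpha = |X|/N\), \(\beta = |Y|/N\). Since \(A\) is symmetric and \(\mathbf{f}\) is an eigenvector of \(A\), the orthogonal complement \(\mathbf{f}^{\perp}\) is \(A\)-invariant, and every eigenvalue of \(A\) restricted to \(\mathbf{f}^{\perp}\) has absolute value at most \(\nu = \max(|\lambda_{2}|,|\lambda_{N}|)\), because \(\lambda_{N} \leq \lambda_{i} \leq \lambda_{2}\) for every \(i \geq 2\). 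I would decompose \(v_X = \alpha \mathbf{f} + w_X\) and \(v_Y = \beta \mathbf{f} + w_Y\) with \(w_X, w_Y \perp \mathbf{f}\); a one-line computation gives \(\|w_X\|^{2} = \alpha(1-\alpha)N\) and \(\|w_Y\|^{2} = \beta(1-\beta)N\).

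The cross-independence hypothesis says exactly that \(v_X^{T} A v_Y = \sum_{x \in X,\, y \in Y} A_{x,y} = 0\). Expanding with the decomposition and using \(A\mathbf{f} = \lambda_{1}\mathbf{f}\) together with \(w_X, w_Y \perp \mathbf{f}\), all cross terms vanish and
\[ 0 \;=\; v_X^{T} A v_Y \;=\; \alpha\beta\,\lambda_{1} N \;+\; w_X^{T} A w_Y , \]
so \(w_X^{T} A w_Y = -\alpha\beta\lambda_{1} N\). On the other hand, by Cauchy--Schwarz and the operator-norm bound \(\|A w_Y\| \leq \nu\|w_Y\|\) on \(\mathbf{f}^{\perp}\),
\[ |w_X^{T} A w_Y| \;\leq\; \|w_X\|\,\|A w_Y\| \;\leq\; \nu\sqrt{\alpha(1-\alpha)N}\,\sqrt{\beta(1-\beta)N} \;=\; \nu N\sqrt{\alpha(1-\alpha)\beta(1-\beta)} . \]
Combining the two displays (the case \(\alpha\beta = 0\) being trivial) gives \(\lambda_{1}\sqrt{\alpha\beta} \leq \nu\sqrt{(1-\alpha)(1-\beta)}\). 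I would then apply AM--GM twice, \((1-\alpha)(1-\beta) \leq \bigl(1 - \tfrac{\alpha+\beta}{2}\bigr)^{2} \leq (1-\sqrt{\alpha\beta})^{2}\), to deduce \(\lambda_{1}\sqrt{\alpha\beta} \leq \nu\bigl(1-\sqrt{\alpha\beta}\bigr)\), i.e. \(\sqrt{\alpha\beta} \leq \nu/(\lambda_{1}+\nu)\), which is precisely \(|X||Y| = \alpha\beta N^{2} \leq \bigl(\tfrac{\nu}{\lambda_{1}+\nu}N\bigr)^{2}\).

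For the equality case I would track tightness of each inequality. First, we may assume \(\alpha,\beta \in (0,1)\): if \(X = [N]\) then column \(y\) of \(A\) would be zero for each \(y\in Y\), contradicting \(\lambda_{1}>0\) unless \(Y=\emptyset\), in which case the bound is not attained (when \(\nu>0\)); symmetrically for the other cases. Granted this, equality in \(\sqrt{(1-\alpha)(1-\beta)} \leq \tfrac{(1-\alpha)+(1-\beta)}{2}\) forces \(\alpha = \beta\), hence \(|X| = |Y|\). Equality in Cauchy--Schwarz forces \(w_X\) to be a (nonzero, negative) scalar multiple of \(A w_Y\), and equality in \(\|A w_Y\| \leq \nu\|w_Y\|\) forces \(w_Y\) into the span \(W\) of the \(\nu\)- and \((-\nu)\)-eigenspaces of \(A\) (since \(\|Aw_Y\|^{2}=\sum_i\lambda_i^{2}\|P_i w_Y\|^{2}\), with \(P_i\) the eigenprojections); then \(A w_Y \in W\), so \(w_X \in W\) as well, and therefore \(v_X = \alpha\mathbf{f}+w_X\) and \(v_Y = \beta\mathbf{f}+w_Y\) both lie in \(U\).

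I do not expect a serious obstacle; this is essentially routine linear algebra. The two points needing care are (i) the estimate \(\|A w\| \leq \nu\|w\|\) for \(w \perp \mathbf{f}\), which rests on \(\lambda_{N} \leq \lambda_{i} \leq \lambda_{2}\) for \(i\geq 2\), and (ii) the equality analysis, in particular the double application of AM--GM that pins down \(\alpha = \beta\) together with the separate disposal of the degenerate cases \(\alpha,\beta \in \{0,1\}\). (One could alternatively try to deduce the statement from Theorem~\ref{thm:weightedhoffman} applied to a \(2\times 2\) block matrix built from \(A\), but the direct bilinear-form computation above seems cleaner and keeps better control of the equality conditions.)
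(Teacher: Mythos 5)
Your proof is correct. Note, however, that the paper in front of you does not itself prove Theorem~\ref{thm:weightedcrosshoffman}: it is quoted as a black box from \cite{friedgutpilpel}, so there is no in-house argument to compare against. The bilinear expander-mixing computation you carry out is the natural one (and, to my recollection, essentially the argument used in \cite{friedgutpilpel}). The steps all check: the cross terms vanish because \(A\mathbf{f}=\lambda_1\mathbf{f}\) and \(w_X,w_Y\perp\mathbf{f}\), giving \(w_X^{T}Aw_Y=-\alpha\beta\lambda_1 N\); the operator bound \(\|Aw\|\le\nu\|w\|\) on \(\mathbf{f}^{\perp}\) is legitimate because \(\mathbf{f}^{\perp}\) is \(A\)-invariant with spectrum contained in \(\{\lambda_2,\dots,\lambda_N\}\), so each eigenvalue there has modulus at most \(\max(|\lambda_2|,|\lambda_N|)=\nu\); and the two AM--GM inequalities, \(\sqrt{(1-\alpha)(1-\beta)}\le 1-\tfrac{\alpha+\beta}{2}\le 1-\sqrt{\alpha\beta}\), are valid (with \(1-\tfrac{\alpha+\beta}{2}\ge0\) always). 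Your equality analysis is also sound: equality in the last AM--GM forces \(\alpha=\beta\); equality in Cauchy--Schwarz and in \(\|Aw_Y\|\le\nu\|w_Y\|\) forces \(w_Y\) into the span of the \(\pm\nu\)-eigenspaces (and \(Aw_Y\ne 0\) since \(\alpha\beta\lambda_1>0\)), whence \(w_X\) lies there too by parallelism, giving \(v_X,v_Y\in U\). Your disposal of the degenerate cases \(\alpha,\beta\in\{0,1\}\) is right under the standing assumption \(\nu>0\); that assumption is genuinely needed for the equality statement (if \(\nu=0\) then \(A=(\lambda_1/N)J\), cross-independence forces one of \(X,Y\) to be empty, and \(|X|=|Y|\) can fail trivially), but it holds in every application in the paper. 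The alternative you sketch via the \(2N\times 2N\) block matrix \(\bigl(\begin{smallmatrix}0&A\\A&0\end{smallmatrix}\bigr)\) would also work, but you are right that the direct bilinear computation gives cleaner control of the equality case.
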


Applying this to our weighted graph \(Y\) yielded:

\begin{theorem}
\label{thm:tcrossintersecting}
For \(n\) sufficiently large depending on \(t\), if \(\mathcal{A},\mathcal{B} \subset S_{n}\) are \(t\)-cross-intersecting, then \(|\mathcal{A}||\mathcal{B}| \leq ((n-t)!)^{2}\).
\end{theorem}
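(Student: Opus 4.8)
The plan is to deduce Theorem~\ref{thm:tcrossintersecting} from the cross-independent weighted Hoffman bound (Theorem~\ref{thm:weightedcrosshoffman}) by applying it to the weighted Cayley graph \(Y\) constructed in~\cite{friedgutpilpel}, in exactly the same spirit in which Theorem~\ref{thm:dezafranklconjecture} follows from Theorem~\ref{thm:weightedhoffman}. Recall that \(\mathcal{A},\mathcal{B}\subset S_{n}\) are \(t\)-cross-intersecting if every \(\sigma\in\mathcal{A}\) and \(\pi\in\mathcal{B}\) agree on at least \(t\) points.

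First I would record the relevant facts about the matrix \(A\) of weights of \(Y\). It is a real symmetric \(n!\times n!\) matrix; being a real linear combination of Cayley graphs on \(S_{n}\) generated by conjugacy classes (which are inverse-closed), all of its row and column sums are equal, so the all-\(1\)'s vector \(\mathbf{f}\) is an eigenvector, and by construction its eigenvalue is \(\lambda_{1}=1>0\). The least eigenvalue is \(\lambda_{N}=\omega_{n,t}=-1/\bigl(n(n-1)\cdots(n-t+1)-1\bigr)\), and every eigenvalue other than \(\lambda_{1}\) has modulus \(o(|\omega_{n,t}|)\); in particular, for \(n\) sufficiently large depending on \(t\) we have \(\nu:=\max(|\lambda_{2}|,|\lambda_{N}|)=|\omega_{n,t}|\).

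Next I would check the combinatorial hypothesis of Theorem~\ref{thm:weightedcrosshoffman}, namely that \(A_{\sigma,\pi}=0\) whenever \(\sigma\in\mathcal{A}\) and \(\pi\in\mathcal{B}\). The only nonzero entries \(A_{\sigma,\pi}\) correspond to pairs with \(\sigma\pi^{-1}\) lying in one of the generating conjugacy classes, i.e. with \(\sigma\pi^{-1}\) having fewer than \(t\) fixed points; since the number of fixed points of \(\sigma\pi^{-1}\) equals the number of points on which \(\sigma\) and \(\pi\) agree, and \(t\)-cross-intersection forces this to be at least \(t\), no such entry can be nonzero. Hence Theorem~\ref{thm:weightedcrosshoffman} applies with \(X=\mathcal{A}\), \(Y=\mathcal{B}\), \(N=n!\), giving \(|\mathcal{A}||\mathcal{B}|\le\bigl(\tfrac{\nu}{\lambda_{1}+\nu}N\bigr)^{2}\).

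It remains to evaluate this bound. Writing \(m=n(n-1)\cdots(n-t+1)=n!/(n-t)!\), we have \(\nu=|\omega_{n,t}|=1/(m-1)\), so \(\tfrac{\nu}{\lambda_{1}+\nu}=\tfrac{1/(m-1)}{1+1/(m-1)}=1/m\), whence \(\tfrac{\nu}{\lambda_{1}+\nu}N=n!/m=(n-t)!\) and therefore \(|\mathcal{A}||\mathcal{B}|\le((n-t)!)^{2}\). There is no serious obstacle in this argument: the substantive work — constructing \(Y\) and computing its spectrum via the representation theory of \(S_{n}\) — is already carried out in~\cite{friedgutpilpel}, and the only point requiring care is that the spectral gap there is wide enough (with \(|\lambda_{2}|=O(|\omega_{n,t}|/n^{1/6})\), say) to guarantee \(\nu=|\omega_{n,t}|\) rather than \(\nu=|\lambda_{2}|\), which is precisely what makes the bound come out to exactly \(((n-t)!)^{2}\).
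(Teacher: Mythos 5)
Your argument is exactly the paper's: it cites Theorem~\ref{thm:tcrossintersecting} as a direct consequence of applying Theorem~\ref{thm:weightedcrosshoffman} to the weighted Cayley graph \(Y\) of \cite{friedgutpilpel}, using the same spectral data (\(\lambda_{1}=1\), \(\lambda_{N}=\omega_{n,t}\), all other eigenvalues \(o(|\omega_{n,t}|)\)) and the same observation that \(t\)-cross-intersection forces \(A_{\sigma,\pi}=0\). Your verification that \(\nu=|\omega_{n,t}|\) for large \(n\) and the arithmetic simplification \(\tfrac{\nu}{\lambda_{1}+\nu}n!=(n-t)!\) are the correct (and only) details needed to fill in the deduction the paper leaves implicit.
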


This will be a crucial tool in our stability analysis. Note that if equality holds in Theorem \ref{thm:tcrossintersecting}, then the characteristic vectors \(v_{\mathcal{A}}\) and \(v_{\mathcal{B}}\) lie in the subspace \(V_{t}\) spanned by the characteristic vectors of the \(t\)-cosets of \(S_{n}\), so by the same argument as before, \(\mathcal{A}\) and \(\mathcal{B}\) must both be equal to the same \(t\)-coset of \(S_{n}\).

We will need the following `stability' version of Theorem \ref{thm:weightedhoffman}:

\begin{lemma}
\label{lemma:stabilityhoffman}
Let \(A\), \(X\) and \(U\) be as in Theorem \ref{thm:weightedhoffman}. Let \(\alpha = |X|/N\). Let \(\lambda_{M}\) be the negative eigenvalue of second largest modulus. Equip \(\mathbb{C}^{N}\) with the inner product:
\[\langle x,y \rangle = \frac{1}{N}\sum_{i=1}^{N} \bar{x_{i}}y_{i}\]
and let
\[||x|| = \sqrt{\frac{1}{N}\sum_{i=1}^{N}|x_{i}|^{2}}\]
be the induced norm. Let \(D\) be the Euclidean distance from the characteristic vector \(v_{X}\) of \(X\) to the subspace \(U\), i.e. the norm \(||P_{U^{\perp}}(v_{X})||\) of the projection of \(v_{X}\) onto \(U^{\perp}\). Then
\[D^{2} \leq \frac{(1-\alpha)|\lambda_{N}| - \lambda_{1} \alpha}{|\lambda_{N}|-|\lambda_{M}|}\alpha \]
\end{lemma}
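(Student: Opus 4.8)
The plan is to run the usual Hoffman eigenvalue argument for the independent set $X$, but instead of discarding the components of $v_X$ outside $U$ I would carry them along as the quantity to be bounded. Decompose
\[ v_X = \alpha \mathbf{f} + y + z, \]
where $y$ is the orthogonal projection of $v_X$ onto the $\lambda_N$-eigenspace and $z = P_{U^\perp}(v_X)$, so that $\alpha\mathbf{f}$, $y$ and $z$ are pairwise orthogonal and $D^2 = \|z\|^2$. With the normalised inner product one has $\|\mathbf{f}\|^2 = 1$, and since $v_X$ is a $0/1$ vector with exactly $|X|$ ones, $\|v_X\|^2 = \alpha$ and $\langle v_X, \mathbf{f}\rangle = \alpha$; hence Pythagoras gives $\|y\|^2 = \alpha(1-\alpha) - D^2$.

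Next I would exploit the independence hypothesis. Since $A_{x,y} = 0$ for all $x,y \in X$ (including $x=y$), we get $\langle v_X, A v_X\rangle = \tfrac1N\sum_{x,y\in X}A_{x,y} = 0$. On the other hand, each of $\operatorname{span}(\mathbf{f})$, the $\lambda_N$-eigenspace, and $U^\perp$ is $A$-invariant, and $\mathbf{f}$, $y$ are eigenvectors with eigenvalues $\lambda_1$, $\lambda_N$, so expanding $\langle v_X, A v_X\rangle$ along this orthogonal decomposition yields
\[ 0 = \lambda_1\alpha^2 + \lambda_N\|y\|^2 + \langle z, A z\rangle. \]
The one step with any content is the lower bound on $\langle z, A z\rangle$: writing $z$ in an eigenbasis, every eigenvalue $\mu$ occurring in $z$ is an eigenvalue of $A$ other than $\lambda_N$ (as $z \perp$ the $\lambda_N$-eigenspace), so either $\mu \geq 0$ or $\mu$ is a negative eigenvalue of modulus at most $|\lambda_M|$; in both cases $\mu \geq \lambda_M$. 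Hence $\langle z, Az\rangle \geq \lambda_M\|z\|^2 = \lambda_M D^2$. Substituting this and $\|y\|^2 = \alpha(1-\alpha) - D^2$ into the displayed identity and rearranging gives
\[ (\lambda_M - \lambda_N)D^2 \leq -\lambda_1\alpha^2 - \lambda_N\alpha(1-\alpha) = \alpha\bigl((1-\alpha)|\lambda_N| - \lambda_1\alpha\bigr), \]
and dividing by $\lambda_M - \lambda_N = |\lambda_N| - |\lambda_M| > 0$ yields exactly the claimed inequality.

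I do not expect a genuine obstacle here: the argument is a "robust" version of Theorem \ref{thm:weightedhoffman}, and the only things to be careful about are the sign bookkeeping (both $\lambda_N$ and $\lambda_M$ are negative, so one must track $\lambda_M - \lambda_N = |\lambda_N| - |\lambda_M|$ correctly) and the harmless possibility that the $\lambda_1$-eigenspace is more than one-dimensional — any extra $\lambda_1$-components land in $z$ but carry eigenvalue $\lambda_1 > 0 \geq \lambda_M$, so the bound $\langle z, Az\rangle \geq \lambda_M D^2$ is unaffected. (As a sanity check, when $\alpha$ equals the Hoffman bound $|\lambda_N|/(\lambda_1+|\lambda_N|)$ the numerator vanishes, forcing $D = 0$, i.e. $v_X \in U$, recovering the equality statement of Theorem \ref{thm:weightedhoffman}.)
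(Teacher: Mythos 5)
Your proof is correct and follows essentially the same route as the paper: both exploit $\langle v_X, A v_X\rangle = 0$, split $v_X$ along the constant direction, the $\lambda_N$-eigenspace, and the remainder, and bound the remainder's Rayleigh quotient from below by $\lambda_M$. You phrase the decomposition via orthogonal projections where the paper expands $v_X$ in an orthonormal eigenbasis and groups the coefficients, but the bookkeeping is the only difference.
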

For completeness, we include a proof:
\begin{proof}
Let \(u_{1} = \mathbf{f}, u_{2},\ldots,u_{N}\) be an orthonormal basis of real eigenvectors of \(A\) corresponding to the eigenvalues \(\lambda_{1},\ldots,\lambda_{N}\). Write
\[v_{X}=\sum_{i=1}^{N} \xi_{i} u_{i}\]
as a linear combination of the eigenvectors of \(A\); we have \(\xi_{1}=\alpha\) and
\[\sum_{i=1}^{N} \xi_{i}^{2} = ||v_{X}||^{2} = |X|/N = \alpha\]
Then we have the crucial property:
\[0=\sum_{x,y \in X}A_{x,y}=v_{X}^{\top} A v_{X} = \sum_{i=1}^{N} \lambda_{i} \xi_{i}^{2} \geq \lambda_{1}\xi_{1}^{2} + \lambda_{N} \sum_{i:\lambda_{i}=\lambda_{N}} \xi_{i}^{2} + \lambda_{M} \sum_{i>1:\lambda_{i} \neq \lambda_{N}}\xi_{i}^{2}\]
Note that
\[\sum_{i>1:\lambda_{i} \neq \lambda_{N}}\xi_{i}^{2} = D^{2}\]
and
\[\sum_{i:\lambda_{i}=\lambda_{N}} \xi_{i}^{2} = \alpha - \alpha^{2} - D^{2}\] 
so we have
\[0 \geq \lambda_{1} \alpha^{2} + \lambda_{N} (\alpha -\alpha^{2} -D^{2}) + \lambda_{M} D^{2}\]
Rearranging, we obtain:
\[D^{2} \leq  \frac{(1-\alpha)|\lambda_{N}| - \lambda_{1} \alpha}{|\lambda_{N}|-|\lambda_{M}|}\alpha\]
as required.
\end{proof}
Our weighted graph \(Y\) has \(\lambda_{N} = \omega_{n,t}\) and \(|\lambda_{M}| = O(|\omega_{n,t}|/n^{1/6})\), so applying the above result to a \(t\)-intersecting family \(\mathcal{A} \subset S_{n}\) gives:
\begin{equation}
\label{eq:smallprojection}
||P_{V_{t}^{\perp}}(v_{\mathcal{A}})||^{2} \leq (1-|\mathcal{A}|/(n-t)!)(1+O(n^{1/6}))|\mathcal{A}|/n!
\end{equation}
Next, we find a formula for the projection \(P_{V_{t}}(v_{\mathcal{A}})\) of the characteristic vector of \(\mathcal{A}\) onto the subspace \(V_{t}\) spanned by the characteristic vectors of the \(t\)-cosets of \(S_{n}\). But first, we need some background on non-Abelian Fourier analysis and the representation theory of the symmetric group.

\subsection*{Background from non-Abelian Fourier analysis}

We now recall some information we need from \cite{friedgutpilpel}. [Notes for algebraists are included in square brackets and may be ignored without prejudicing the reader's understanding.]

If \(G\) is a finite group, a \textit{representation} of \(G\) is a vector space \(W\) together with a group homomorphism \(\rho:G \to \textrm{GL}(W)\) from \(G\) to the group of all automorphisms of \(W\), or equivalently a linear action of \(G\) on \(W\). If \(W = \mathbb{C}^{m}\), then \(\textrm{GL}(W)\) can be identified with the group of all complex invertible \(m \times m\) matrices; we call \(\rho\) a \textit{complex matrix representation} of degree (or dimension) \(m\). [Note that \(\rho\) makes \(\mathbb{C}^{m}\) into a \(\mathbb{C}G\)-module of dimension \(m\).]

We say a representation \((\rho,W)\) is \textit{irreducible} if it has no proper subrepresentation, i.e. no proper subspace of \(W\) is fixed by \(\rho(g)\) for every \(g \in G\). We say that two (complex) representations \((\rho,W)\) and \((\rho',W')\) are \textit{equivalent} if there exists a linear isomorphism \(\phi:W \to W'\) such that \(\rho'(g) \circ \phi = \phi \circ \rho(g)\ \forall g \in G\).

For any finite group \(G\), there are only finitely many equivalence classes of irreducible complex representations of \(G\). Let \((\rho_{1},\rho_{2},\ldots,\rho_{k})\) be a complete set of pairwise non-equivalent complex irreducible matrix representations of \(G\) (i.e. containing one from each equivalence class of complex irreducible representations).

\begin{definition}
The {\em (non-Abelian) Fourier transform} of a function \(f: G \to \mathbb{C}\) at the irreducible representation \(\rho_{i}\) is the matrix
\[\hat{f}(\rho_{i}) = \frac{1}{|G|} \sum_{g \in G} f(g) \rho_{i}(g)\]
\end{definition}

Let \(V_{\rho_{i}}\) be the subspace of functions whose Fourier transform is concentrated on \(\rho_{i}\), i.e. with \(\hat{f}(\rho_{j}) = 0\) for each \(j \neq i\). [Identifying the space \(\mathbb{C}[G]\) of all complex-valued functions on \(G\) with the \textit{group module} \(\mathbb{C}G\), \(V_{\rho_{i}}\) is the sum of all submodules of the group module isomorphic to the module defined by \(\rho_{i}\); it has dimension \(\dim(V_{\rho_{i}}) = (\dim(\rho_{i}))^{2}\). The group module decomposes as
\[\mathbb{C}G = \bigoplus_{i=1}^{k} V_{\rho_{i}}\]
Write \(\textrm{Id} = \sum_{i=1}^{k}e_{i}\), where \(e_{i} \in V_{\rho_{i}}\) for each \(i \in [k]\). The \(e_{i}\)'s are called the \textit{primitive central idempotents} of \(\mathbb{C}G\); they are given by the following formula:
\[e_{i} = \frac{\dim(\rho_{i})}{|G|} \sum_{g \in G} \chi_{i}(g^{-1}) g\]
They are in the \textit{centre} \(Z(\mathbb{C}G)\) of the group module, and satisfy \(e_{i}e_{j} = \delta_{i,j}\). Note that \(V_{\rho_{i}}\) is the two-sided ideal of \(\mathbb{C}G\) generated by \(e_{i}\). For any \(x \in \mathbb{C}G\), the unique decomposition of \(x\) into elements of the \(V_{\rho_{i}}\)'s is given by \(x = \sum_{i=1}^{k} e_{i}x\).]

A function \(f: G\to \mathbb{C}\) may be recovered from its Fourier transform using the Fourier Inversion Formula:
\[f(g) = \sum_{i=1}^{k} \dim(\rho_{i}) \textrm{Tr}\left(\hat{f}(\rho_{i})\rho_{i}(g^{-1})\right)\]
where \(\textrm{Tr}(M)\) denotes the trace of the matrix \(M\). It follows from this that the projection of \(f\) onto \(V_{\rho_{i}}\) has \(g\)-coordinate
\[P_{V_{\rho_{i}}}(f)_{g} = \frac{\dim(\rho_{i})}{|G|}\sum_{h \in G} f(h)\textrm{Tr}(\rho_{i}(h g^{-1})) = \frac{\dim(\rho_{i})}{|G|}\sum_{h \in G} f(h)\chi_{\rho_{i}}(h g^{-1})\]
where \(\chi_{\rho_{i}}(g) = \textrm{Tr}(\rho_{i}(g))\) denotes the character of the representation \(\rho_{i}\). 

\subsection*{Background on the representation theory of \(S_{n}\)}
A \emph{partition} of \(n\) is a non-increasing sequence of positive integers summing to \(n\), i.e. a sequence $\alpha = (\alpha_1, \ldots, \alpha_k)$ with \(\alpha_{1} \geq \alpha_{2} \geq \ldots \geq \alpha_{l} \geq 1\) and \(\sum_{i=1}^{l} \alpha_{i}=n\); we write \(\alpha \vdash n\). For example, \((3,2,2) \vdash 7\); we sometimes use the shorthand \((3,2,2) = (3,2^{2})\).

The \emph{cycle-type} of a permutation \(\sigma \in S_{n}\) is the partition of \(n\) obtained by expressing \(\sigma\) as a product of disjoint cycles and listing its cycle-lengths in non-increasing order. The conjugacy-classes of \(S_{n}\) are precisely
\[\{\sigma \in S_{n}: \textrm{ cycle-type}(\sigma) = \alpha\}_{\alpha \vdash n}.\]
Moreover, there is an explicit 1-1 correspondence between irreducible representations of \(S_{n}\) (up to isomorphism) and partitions of \(n\), which we now describe.

Let \(\alpha = (\alpha_{1}, \ldots, \alpha_{l})\) be a partiton of \(n\). The \emph{Young diagram} of $\alpha$ is
  an array of $n$ dots, or cells, having $l$ left-justified rows where row $i$
  contains $\alpha_i$ dots. For example, the Young diagram of the partition \((3,2^{2})\) is\\
  \\
  \begin{tabular}{ccc} \(\bullet\) & \(\bullet\) & \(\bullet\)\\
\(\bullet\) & \(\bullet\) &\\
\(\bullet\) & \(\bullet\) &\\
\end{tabular}\\
\\
\\
If the array contains the numbers \(\{1,2,\ldots,n\}\) in some order in place of the dots, we call it an \(\alpha\)-\emph{tableau}; for example,\\
  \\
  \begin{tabular}{ccc} 6 & 1 & 7\\
5 & 4 &\\
3 & 2 &\\
\end{tabular}\\
\\
is a \((3,2^{2})\)-tableau. Two \(\alpha\)-tableaux are said to be \emph{row-equivalent} if for each row, they have the same numbers in that row. If an \(\alpha\)-tableau \(s\) has rows \(R_{1},\ldots,R_{l} \subset [n]\) and columns \(C_{1},\ldots,C_{k} \subset [n]\), we let \(R_{s} = S_{R_{1}} \times S_{R_{2}} \times \ldots \times S_{R_{l}}\) be the row-stablizer of \(s\) and \(C_s = S_{C_{1}} \times S_{C_{2}} \times \ldots \times S_{C_{k}}\) be the column-stabilizer.

An \(\alpha\)-\emph{tabloid} is an \(\alpha\)-tableau with unordered row entries (or formally, a row-equivalence class of \(\alpha\)-tableaux); given a tableau \(s\), we write \([s]\) for the tabloid it produces. For example, the \((3,2^{2})\)-tableau above produces the following \((3,2^{2})\)-tabloid\\
\\
\begin{tabular}{ccc} \{1 & 6 & 7\}\\
\{4 &\ 5\} &\\
\{2 &\ 3\} &\\
\end{tabular}\\
\\
\\
Consider the natural left action of \(S_{n}\) on the set \(X^{\alpha}\) of all \(\alpha\)-tabloids; let \(M^{\alpha} = \mathbb{C}[X^{\alpha}]\) be the corresponding permutation module, i.e. the complex vector space with basis \(X^{\alpha}\) and \(S_{n}\) action given by extending this action linearly. Given an \(\alpha\)-tableau \(s\), we define the corresponding \(\alpha\)-\textit{polytabloid}
\[e_{s} := \sum_{\pi \in C_{s}} \epsilon(\pi) \pi[s]\]
We define the \textit{Specht module} \(S^{\alpha}\) to be the submodule of \(M^{\alpha}\) spanned by the \(\alpha\)-polytabloids:
\[S^{\alpha} = \textrm{Span}\{e_s:\ s \textrm{ is an } \alpha\textrm{-tableau}\}.\]
A central observation in the representation theory of \(S_{n}\) is that \textit{the Specht modules are a complete set of pairwise non-isomorphic, irreducible representations of} $S_n$. Hence, any irreducible representation \(\rho\) of \(S_{n}\) is isomorphic to some \(S^{\alpha}\). For example, \(S^{(n)} = M^{(n)}\) is the trivial representation; \(M^{(1^{n})}\) is the left-regular representation, and \(S^{(1^{n})}\) is the sign representation \(S\).

We say that a tableau is \textit{standard} if the numbers strictly increase along each row and down each column. It turns out that for any partition \(\alpha\) of \(n\),
\[\{e_{t}: t \textrm{ is a standard }\alpha \textrm{-tableau}\}\]
is a basis for the Specht module \(S^{\alpha}\).

Given a partition \(\alpha\) of \(n\), for each cell \((i,j)\) in its Young diagram, we define the `hook-length' \((h_{i,j}^{\alpha})\) to be the number of cells in its `hook' (the set of cells in the same row to the right of it or in the same column below it, including itself) --- for example, the hook-lengths of \((3,2^{2})\) are as follows:\\

\begin{tabular}{ccc}
5 & 4 & 1 \\
3 & 2 &\\
2 & 1 &
\end{tabular}\\
\\

The dimension \(f^{\alpha}\) of the Specht module \(S^{\alpha}\) is given by the following formula
\begin{equation}
\label{eq:hook}
f^{\alpha} = n!/\prod{(\textrm{hook lengths of } [\alpha])}
\end{equation}

From now on we will write \([\alpha]\) for the equivalence class of the irreducible representation \(S^{\alpha}\), \(\chi_{\alpha}\) for the irreducible character \(\chi_{S^{\alpha}}\), and \(\xi_{\alpha}\) for the character of the permutation representation \(M^{\alpha}\). Notice that the set of \(\alpha\)-tabloids form a basis for \(M^{\alpha}\), and therefore \(\xi_{\alpha}(\sigma)\), the trace of the corresponding permutation representation at \(\sigma\), is precisely the number of \(\alpha\)-tabloids fixed by \(\sigma\). 

We now explain how the permutation modules \(M^{\beta}\) decompose into irreducibles.

\begin{definition}
Let \(\alpha,\beta\) be partitions of \(n\). A \emph{generalized} \(\alpha\)-\emph{tableau} is produced by replacing each dot in the Young diagram of \(\alpha\) with a number between 1 and \(n\); if a generalized \(\alpha\)-tableau has \(\beta_{i}\) \(i\)'s \((1 \leq i \leq n)\) it is said to have \emph{content} \(\beta\). A generalized \(\alpha\)-tableau is said to be \emph{semistandard} if the numbers are non-decreasing along each row and strictly increasing down each column.
\end{definition}

  \begin{definition} \label{def:kostka}
  Let $\alpha, \beta$ be partitions of $n$. The {\em Kostka number}
  $K_{\alpha,\beta}$ is the number of semistandard generalized $\alpha$-tableaux
  with content $\beta$.
\end{definition}

\textit{Young's Rule} states that for any partition \(\beta\) of \(n\), the permutation module \(M^{\beta}\) decomposes into irreducibles as follows:
\[M^{\beta} \cong \oplus_{\alpha \vdash n} K_{\alpha, \beta} S^{\alpha}\]

For example, \(M^{(n-1,1)}\), which corresponds to the natural permutation action of \(S_{n}\) on \([n]\), decomposes as
\[M^{(n-1,1)} \cong S^{(n-1,1)} \oplus S^{(n)}\]
and therefore
\[\xi_{(n-1,1)} = \chi_{(n-1,1)}+1\]

Let \(V_{\alpha}\) be the subspace of \(\mathbb{C}[S_{n}]\) consisting of functions whose Fourier transform is concentrated on \([\alpha]\); equivalently, \(V_{\alpha}\) is the sum of all submodules of \(\mathbb{C}S_{n}\) isomorphic to the Specht module \(S^{\alpha}\).

We call a partition of \(n\) (or an irreducible representation of \(S_{n}\)) `fat' if its Young diagram has first row of length at least \(n-t\). Let \(\mathcal{F}_{n,t}\) denote the set of all fat partitions of \(n\); note that for \(n \geq 2t\),
\[|\mathcal{F}_{n,t}| = \sum_{s=0}^{t}p(s)\]
where \(p(s)\) denotes the number of partitions of \(s\). This grows very rapidly with \(t\), but (as will be crucial for our stability analysis) it is independent of \(n\) for \(n \geq 2t\). Note that \(\{[\alpha]:\alpha \textrm{ is fat}\}\) are precisely the irreducible constituents of the permutation module \(M^{(n-t,1^{t})}\) corresponding to the action of \(S_{n}\) on \(t\)-tuples of distinct numbers, since \(K_{\alpha,(n-t,1^{t})} \geq 1\) iff there exists a semistandard generalized \(\alpha\)-tableau of content \((n-t,1^{t})\), i.e. iff \(\alpha_{1} \geq n-t\). 

Recall from \cite{friedgutpilpel} that \(V_{t}\) is the subspace of functions whose Fourier transform is concentrated on the `fat' irreducible representations of \(S_{n}\); equivalently,
\begin{equation}
\label{eq:fatdecomposition}
V_{t} = \bigoplus_{\textrm{fat }\alpha}V_{\alpha}
\end{equation}
The projection of \(u \in \mathbb{C}[S_{n}]\) onto \(V_{\alpha}\) has \(\sigma\)-coordinate
\[P_{V_{\alpha}}(u)_{\sigma} = \frac{f^{\alpha}}{n!}\sum_{\pi \in S_{n}} u(\pi)\chi_{\alpha}(\pi \sigma^{-1})\]
and therefore the projection of \(u\) onto \(V_{t}\) has \(\sigma\)-coordinate
\begin{equation}
\label{eq:projectionformula}
P_{V_{t}}(u)_{\sigma} = \frac{1}{n!}\sum_{\textrm{fat }\alpha} f^{\alpha}\sum_{\pi \in S_{n}} u(\pi)\chi_{\alpha}(\pi \sigma^{-1})
\end{equation}

\section{Stability}
We are now in a position to prove our rough stability result:
\begin{theorem} \label{thm:stability}
Let \(t \in \mathbb{N}, c>0\) be fixed. If \(\mathcal{A} \subset S_{n}\) is a \(t\)-intersecting family with \(|\mathcal{A}| \geq c (n-t)!\), then there exists a \(t\)-coset \(\mathcal{C}\) such that \(|\mathcal{A} \setminus \mathcal{C}| \leq O((n-t-1)!)\).
\end{theorem}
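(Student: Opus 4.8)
The starting point is the "small projection" estimate \eqref{eq:smallprojection}: since \(\mathcal{A}\) is \(t\)-intersecting and \(|\mathcal{A}| \geq c(n-t)!\), its characteristic vector \(v_{\mathcal{A}}\) is within squared-distance \(O(|\mathcal{A}|/n!) = O(1/(t!\,n^{t}))\cdot\) (up to constants) of the subspace \(V_{t}\) spanned by the characteristic vectors of the \(t\)-cosets. Concretely, if we write \(g = P_{V_{t}}(v_{\mathcal{A}})\) for the projection, then \(\|v_{\mathcal{A}} - g\|^{2} = O(|\mathcal{A}|/n!)\) in the normalised inner product, i.e. \(\sum_{\sigma}(v_{\mathcal{A}}(\sigma) - g(\sigma))^{2} = O(|\mathcal{A}|/n)\) in the counting measure. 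The idea is to use the explicit formula \eqref{eq:projectionformula} for \(g = P_{V_{t}}(v_{\mathcal{A}})\) to show that \(g\) is essentially a \(\{0,1\}\)-valued function supported on a single \(t\)-coset, and then transfer this back to \(\mathcal{A}\).

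First I would analyse \(g\) directly. Writing \(g_{\sigma} = \frac{1}{n!}\sum_{\text{fat }\alpha}f^{\alpha}\sum_{\pi\in\mathcal{A}}\chi_{\alpha}(\pi\sigma^{-1})\), the dominant contributions come from the two "largest" fat partitions, \((n)\) (contributing the constant \(|\mathcal{A}|/n!\)) and \((n-1,1)\): using \(\xi_{(n-1,1)} = \chi_{(n-1,1)} + 1\), the \((n-1,1)\)-term is controlled by the numbers \(a_{ij} := |\{\pi\in\mathcal{A} : \pi(i) = j\}|\). All other fat partitions have \(f^{\alpha} = O(n^{t-2})\)-type dimension (for \(\alpha \ne (n), (n-1,1)\) one has \(f^{\alpha} \le n^{t-1}/\text{const}\), actually smaller), and their characters evaluated on permutations are bounded crudely, so their total contribution to any coordinate of \(g\) is a lower-order term. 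Combining this with the distance bound, a second-moment / Parseval argument on the \((n-1,1)\)-isotypic component forces the \(n\times n\) matrix \((a_{ij})\) to be close to \(|\mathcal{A}|/n\) times a permutation matrix (this is where Theorem~\ref{thm:tcrossintersecting} on \(t\)-cross-intersecting families enters: the sets \(\mathcal{A}_{i\to j} = \{\pi\in\mathcal{A}:\pi(i)=j\}\) are pairwise \((t-1)\)-cross-intersecting, which pins down the combinatorics). From the near-permutation-matrix structure one extracts \(t\) coordinates \(i_{1},\dots,i_{t}\) and values \(j_{1},\dots,j_{t}\) such that the \(t\)-coset \(\mathcal{C} = \{\sigma : \sigma(i_{k}) = j_{k}\ \forall k\}\) contains a \((1-o(1))\)-fraction of \(\mathcal{A}\).

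To upgrade "\(\mathcal{C}\) contains a \((1-o(1))\)-fraction of \(\mathcal{A}\)" to the sharp bound \(|\mathcal{A}\setminus\mathcal{C}| = O((n-t-1)!)\), I would use a bootstrapping argument. Let \(\mathcal{A}_{1} = \mathcal{A}\cap\mathcal{C}\) and \(\mathcal{A}_{2} = \mathcal{A}\setminus\mathcal{C}\). Then \(|\mathcal{A}_{1}| = (1-o(1))|\mathcal{A}| = \Theta((n-t)!)\). Now every \(\pi\in\mathcal{A}_{2}\) must \(t\)-intersect every element of \(\mathcal{A}_{1}\); since \(\pi\notin\mathcal{C}\), \(\pi\) disagrees with the "centre" of \(\mathcal{C}\) somewhere among \(i_{1},\dots,i_{t}\), and one shows that for a typical \(\pi\in\mathcal{A}_{2}\) the set of elements of \(\mathcal{A}_{1}\) it can \(t\)-intersect is restricted to a sub-family of size \(O((n-t-1)!)\) — roughly, \(\pi\) must agree with them on a set of size \(\geq t\) that is "shifted" off the centre, and by a counting argument (or a further application of the Deza--Frankl bound / cross-intersection bound to the relevant sub-families) this limits \(|\mathcal{A}_{1}|\) unless \(|\mathcal{A}_{2}|\) is small. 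Feeding \(|\mathcal{A}_{1}| = \Theta((n-t)!)\) back in forces \(|\mathcal{A}_{2}| = |\mathcal{A}\setminus\mathcal{C}| = O((n-t-1)!)\).

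**Main obstacle.** The delicate step is the middle one: converting the \(L^{2}\)-closeness of \(v_{\mathcal{A}}\) to \(V_{t}\) into genuine \emph{combinatorial} structure, namely that \(\mathcal{A}\) is concentrated on one specific \(t\)-coset rather than spread thinly across a union of \(t\)-cosets or across the complement of one. A Boolean function in \(V_{t}\) is exactly a disjoint union of \(t\)-cosets, but \(v_{\mathcal{A}}\) is only \emph{near} \(V_{t}\), and its projection \(g\) need not be Boolean; one has to rule out, e.g., \(\mathcal{A}\) looking like two half-\(t\)-cosets glued together. This is exactly where the \(t\)-cross-intersection theorem (Theorem~\ref{thm:tcrossintersecting}) does the heavy lifting — it says two disjoint-ish pieces of a \(t\)-intersecting family cannot both be large and \(t\)-cross-intersecting — and carefully combining it with the Fourier estimate on the \((n-1,1)\)-component is the technical heart of the argument. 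Controlling the error terms from the remaining (bounded but numerous, for large \(t\)) fat partitions uniformly in \(n\) is routine but must be done with some care, since \(|\mathcal{F}_{n,t}|\) depends on \(t\).
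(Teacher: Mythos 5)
Your outline correctly identifies the two tools the paper uses (the projection bound \eqref{eq:smallprojection} and the $t$-cross-intersection theorem, Theorem~\ref{thm:tcrossintersecting}) and the final bootstrapping step, but the central step of your argument has a genuine gap, in two respects.

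First, the $(n-1,1)$-isotypic subspace is \emph{contained in} $V_{t}$, so the bound \eqref{eq:smallprojection} says nothing at all about the $(n-1,1)$-component of $v_{\mathcal{A}}$ and hence nothing directly about the matrix $(a_{ij})$. Moreover, even if you project onto all of $V_{t}$, for a general fixed constant $c<1$ the estimate only gives $\|P_{V_{t}^{\perp}}(v_{\mathcal{A}})\|^{2}\leq(1-c)(1+o(1))\|v_{\mathcal{A}}\|^{2}$, a \emph{constant} relative error. So $v_{\mathcal{A}}$ is not close to a Boolean indicator in any useful sense, and you cannot read off approximate combinatorial structure of $\mathcal{A}$ from the projection alone; for instance, most $\sigma\in\mathcal{A}$ could have $P_{\sigma}$ as low as $1-\sqrt{1-c}$. (As a side remark, the target structure is also stated incorrectly: for a $t$-coset, $t$ entries of $(a_{ij})$ equal $(n-t)!\approx|\mathcal{A}|$, not $|\mathcal{A}|/n$; the other entries are $\Theta((n-t-1)!)$. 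The matrix is not close to $(|\mathcal{A}|/n)$ times a permutation matrix.)

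What is missing is the argument that produces a single ``seed'' $t$-coset $\mathcal{C}_{0}$ with $|\mathcal{A}\cap\mathcal{C}_{0}|=\omega((n-2t)!)$, since this is what makes the cross-intersection bound bite. The paper does this (in Lemma~\ref{lemma:1coset}) by a quite different route: it isolates a set $\mathcal{S}\subset\mathcal{A}$ of size $\geq|\mathcal{A}|/n$ with $P_{\sigma}$ close to $1$ and a large set $\mathcal{T}\subset\mathcal{A}^{c}$ with $P_{\pi}$ close to $0$, connects them by a short path in the transposition Cayley graph using a Maurey-type isoperimetric inequality, averages along the path to find one transposition step where $P$ drops by $\Omega(1/\sqrt{n\log n})$, and then unwinds the fat characters via the determinantal formula to locate a $\beta$-tabloid fixed by $\Omega(1/\sqrt{n\log n})\cdot(n-s)!(n-t)!/(n-1)!$ elements of $\mathcal{A}$, eventually arriving at $|\mathcal{A}_{1\mapsto1,\ldots,t\mapsto t}|=\omega((n-2t)!)$. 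Only then does Theorem~\ref{thm:tcrossintersecting} give $|\mathcal{A}_{1\mapsto j_{1},\ldots,t\mapsto j_{t}}|=o((n-2t)!)$ for the other choices, and from this one extracts a $1$-coset containing all but $O((n-t-1)!)$ of $\mathcal{A}$; the $t$-coset statement then follows by induction on $t$, deleting the fixed point and applying the $(t-1)$ case to the restricted family. Your proposal skips both the isoperimetric/averaging step and the induction on $t$, and without them I do not see how to get from the $L^{2}$ information to the required combinatorial conclusion when $c$ is a small constant.
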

In other words, if \(\mathcal{A} \subset S_{n}\) is a \(t\)-intersecting family of size at least a constant proportion of the maximum possible size \((n-t)!\), then there is some \(t\)-coset containing all but at most a \(O(1/n)\)-fraction of \(\mathcal{A}\).

To prove this, we will first prove the following weaker statement:
\begin{lemma}
\label{lemma:1coset}
Let \(t \in \mathbb{N}, c>0\) be fixed. If \(\mathcal{A} \subset S_{n}\) is a \(t\)-intersecting family of size at least \(c (n-t)!\), then there exist \(i\) and \(j\) such that all but at most \(O((n-t-1)!)\) permutations in \(\mathcal{A}\) map \(i\) to \(j\).
\end{lemma}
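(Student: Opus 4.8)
The plan is to combine the eigenvalue estimate (\ref{eq:smallprojection}) with a cross-intersecting argument. I would proceed in two stages: first, use (\ref{eq:smallprojection}) and the representation theory of \(S_{n}\) to show that \emph{some} \(t\)-coset \(\mathcal{C}\) satisfies \(|\mathcal{A}\cap\mathcal{C}|\geq\epsilon(n-t)!\) for a constant \(\epsilon=\epsilon(c,t)>0\); then, bootstrap this weak concentration to the full statement of the Lemma by applying the \(t\)-cross-intersecting bound (Theorem \ref{thm:tcrossintersecting}) inside \(S_{n-1}\).

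For the first stage: since \(|\mathcal{A}|\geq c(n-t)!\), (\ref{eq:smallprojection}) gives \(\|P_{V_{t}^{\perp}}(v_{\mathcal{A}})\|^{2}\leq(1-c+o(1))\,|\mathcal{A}|/n!\), and therefore \(\|P_{V_{t}}(v_{\mathcal{A}})\|^{2}\geq(c-o(1))\,|\mathcal{A}|/n!\); that is, a positive constant fraction of the \(\ell^{2}\)-mass of \(v_{\mathcal{A}}\) lies in \(V_{t}\). By (\ref{eq:projectionformula}), \(\|P_{V_{t}}(v_{\mathcal{A}})\|^{2}=\sum_{\textrm{fat }\alpha}\tfrac{f^{\alpha}}{(n!)^{2}}\sum_{\sigma,\pi\in\mathcal{A}}\chi_{\alpha}(\pi\sigma^{-1})\). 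Writing \(|\mathcal{A}_{\mathcal{C}}|:=|\mathcal{A}\cap\mathcal{C}|\) for a \(t\)-coset \(\mathcal{C}\), one has the elementary identities \(\sum_{\mathcal{C}}|\mathcal{A}_{\mathcal{C}}|=\tfrac{n!}{(n-t)!}|\mathcal{A}|\) and \(\sum_{\mathcal{C}}|\mathcal{A}_{\mathcal{C}}|^{2}=\sum_{\sigma,\pi\in\mathcal{A}}(\textrm{agree}(\sigma,\pi))_{\underline{t}}\), where \((x)_{\underline{t}}=x(x-1)\cdots(x-t+1)\); since \(\mathcal{A}\) is \(t\)-intersecting, the second sum is at least \(t!\,|\mathcal{A}|^{2}\). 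Using the decomposition \((\textrm{fix}\,\tau)_{\underline{t}}=\xi_{(n-t,1^{t})}(\tau)=\sum_{\textrm{fat }\alpha}K_{\alpha,(n-t,1^{t})}\chi_{\alpha}(\tau)\) to compare these quantities with the expression for \(\|P_{V_{t}}(v_{\mathcal{A}})\|^{2}\), one should be able to conclude that \(\max_{\mathcal{C}}|\mathcal{A}_{\mathcal{C}}|\geq\epsilon(n-t)!\). I expect this to be the main obstacle: the crude inequality \(\sum_{\mathcal{C}}|\mathcal{A}_{\mathcal{C}}|^{2}\geq t!\,|\mathcal{A}|^{2}\) by itself only yields \(\max_{\mathcal{C}}|\mathcal{A}_{\mathcal{C}}|=\Omega((n-t)!/n^{t})\), so to upgrade the average to a \emph{maximum} that is a constant fraction of \((n-t)!\) one must really use the lower bound on \(\|P_{V_{t}}(v_{\mathcal{A}})\|^{2}\) together with the fact that, for a Boolean \(t\)-intersecting function, the Fourier mass is not only essentially carried by the \(O_{t}(1)\) fat representations but is also distributed among them in a constrained way — this is where the characters \(\chi_{\alpha}\) of \(S_{n}\) for fat \(\alpha\) must be analysed, the ``more involved'' representation-theoretic input flagged in the introduction.

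For the second stage, suppose \(\mathcal{C}=\{\sigma:\sigma(i_{k})=j_{k},\ 1\leq k\leq t\}\) with \(|\mathcal{A}_{\mathcal{C}}|\geq\epsilon(n-t)!\); I claim \((i,j)=(i_{1},j_{1})\) works. Relabel so that \(i_{1}=j_{1}=1\). The restriction map \(\sigma\mapsto\sigma|_{[n]\setminus\{1\}}\) is injective and sends \(\mathcal{A}_{\mathcal{C}}\) to a family \(\mathcal{G}\) of permutations of \([n]\setminus\{1\}\) with \(|\mathcal{G}|=|\mathcal{A}_{\mathcal{C}}|\geq\epsilon(n-t)!\). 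For each \(j'\neq 1\), the set \(\mathcal{A}_{1\to j'}=\{\sigma\in\mathcal{A}:\sigma(1)=j'\}\) restricts to a family of bijections \([n]\setminus\{1\}\to[n]\setminus\{j'\}\); post-composing each of these with the bijection \(\psi_{j'}:[n]\setminus\{j'\}\to[n]\setminus\{1\}\) that sends \(1\mapsto j'\) and fixes every other element produces a family \(\mathcal{H}_{j'}\) of permutations of \([n]\setminus\{1\}\) with \(|\mathcal{H}_{j'}|=|\mathcal{A}_{1\to j'}|\). If \(\sigma\in\mathcal{A}_{\mathcal{C}}\) and \(\pi\in\mathcal{A}_{1\to j'}\), then \(\sigma,\pi\in\mathcal{A}\) agree on at least \(t\) points, none equal to \(1\) (since \(\sigma(1)=1\neq j'=\pi(1)\)); at every such point \(p\) the common value \(\sigma(p)=\pi(p)\) lies in \(([n]\setminus\{1\})\cap([n]\setminus\{j'\})\), so \(\psi_{j'}\) fixes it and the agreement survives. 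Hence, identifying \([n]\setminus\{1\}\) with \([n-1]\), the families \(\mathcal{G}\) and \(\mathcal{H}_{j'}\) are \(t\)-cross-intersecting in \(S_{n-1}\), so Theorem \ref{thm:tcrossintersecting} (valid since \(n-1\) is large in terms of \(t\)) gives \(|\mathcal{G}|\,|\mathcal{H}_{j'}|\leq((n-1-t)!)^{2}\), whence \(|\mathcal{A}_{1\to j'}|\leq((n-t-1)!)^{2}/(\epsilon(n-t)!)=(n-t-1)!/(\epsilon(n-t))\). Summing over the at most \(n\) values \(j'\neq 1\) gives \(|\{\sigma\in\mathcal{A}:\sigma(1)\neq 1\}|=O((n-t-1)!)\), which is the assertion of the Lemma.

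In short, the combinatorial bootstrap of the second stage is routine once the first stage is in hand, and the crux — the step I would expect to cost the most work — is extracting from (\ref{eq:smallprojection}) the existence of a single \(t\)-coset that already captures a constant fraction of \(\mathcal{A}\); a pure second-moment estimate loses a polynomial factor in \(n\) there, so one is forced to work with the individual Fourier components \(P_{V_{\alpha}}(v_{\mathcal{A}})\) over fat \(\alpha\) and to use the \(t\)-intersection hypothesis more carefully.
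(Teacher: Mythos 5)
Your Stage~2 (the one-coordinate cross-intersecting argument in \(S_{n-1}\)) is correct and is indeed the final step of the paper's proof. The genuine gap is in Stage~1, and it is not merely an unexecuted sketch: the target you set there is stronger than what the representation theory actually delivers. The paper does not obtain a \(t\)-coset \(\mathcal{C}\) with \(|\mathcal{A}\cap\mathcal{C}|\geq\epsilon(n-t)!\) directly from the Fourier estimate. What it proves, after a rather different argument, is only \(|\mathcal{A}_{1\mapsto 1,\ldots,t\mapsto t}|\geq\omega((n-2t)!)\), i.e.\ slightly above the average \(\Theta((n-2t)!)\) but still a vanishing fraction of \((n-t)!\). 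Your own remark that a second-moment estimate loses a factor of \(n^{t}\) is exactly right, and the paper does not recover that factor by analysing characters more finely; it accepts the weak lower bound and closes the gap combinatorially. With only \(\omega((n-2t)!)\) in hand, your Stage~2 as written fails: dividing \(((n-t-1)!)^{2}\) by \(\omega((n-2t)!)\) gives roughly \(n^{t-1}(n-t-1)!\) per value of \(j'\), and summing over the \(\sim n\) values of \(j'\) gives a bound far larger than \(O((n-t-1)!)\).

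The missing idea is an intermediate \(t\)-coordinate cross-intersecting step. For any distinct \(j_{1}\neq 1,\ldots,j_{t}\neq t\), the pair \(\mathcal{A}_{1\mapsto 1,\ldots,t\mapsto t}\) and \(\mathcal{A}_{1\mapsto j_{1},\ldots,t\mapsto j_{t}}\) is \(t\)-cross-intersecting; after a translation one may regard both as subsets of \(S_{n-t}\), so Theorem \ref{thm:tcrossintersecting} in \(S_{n-t}\) gives \(|\mathcal{A}_{1\mapsto 1,\ldots,t\mapsto t}|\,|\mathcal{A}_{1\mapsto j_{1},\ldots,t\mapsto j_{t}}|\leq((n-2t)!)^{2}\), whence each \(|\mathcal{A}_{1\mapsto j_{1},\ldots,t\mapsto j_{t}}|=o((n-2t)!)\). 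Summing over the \(\leq n(n-1)\cdots(n-t+1)\) choices of \((j_{1},\ldots,j_{t})\) shows that all but \(o((n-t)!)\) permutations in \(\mathcal{A}\) fix some point of \([t]\); then by averaging there is \(i\in[t]\) with \(|\mathcal{A}_{i\mapsto i}|\geq(c-o(1))(n-t)!/t\). Only at this point does the constant-fraction hypothesis of your Stage~2 hold, and your argument then finishes the proof. Pinning the cross-intersection on \(t\) coordinates rather than one is precisely what cancels the lost \(n^{t}\) factor, because the quadratic loss from Theorem \ref{thm:tcrossintersecting} is taken at scale \((n-2t)!\) while the number of ``bad'' \(t\)-tuples is only \(\sim n^{t}\).

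Separately, your sketch of how to extract a heavy \(t\)-coset from \(\|P_{V_{t}}(v_{\mathcal{A}})\|^{2}\) by comparing with \(\sum_{\mathcal{C}}|\mathcal{A}_{\mathcal{C}}|^{2}\) is not the route the paper takes and is left without a proof. The paper instead finds many \(\sigma\in\mathcal{A}\) with \(P_{\sigma}\) near \(1\) and many \(\pi\notin\mathcal{A}\) with \(P_{\pi}\) near \(0\), applies the Maurey-type isoperimetric inequality on the transposition Cayley graph to find such a \(\sigma,\pi\) within transposition distance \(O(\sqrt{n\log n})\), averages along a shortest path to obtain two permutations differing by a single transposition with \(P\)-difference \(\Omega(1/\sqrt{n\log n})\), then expands over the \(O_{t}(1)\) fat irreducibles, rewrites \(\chi_{\alpha}\) in terms of permutation characters \(\xi_{\beta}\) via the determinantal formula, and finally localizes to a single \(\beta\)-tabloid. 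That chain of reductions is the ``more involved'' representation-theoretic input, and it produces only the superaverage bound \(\omega((n-2t)!)\), not a constant fraction.
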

In other words, a large \(t\)-intersecting family is almost contained within a 1-coset. Theorem \ref{thm:stability} will follow easily from this by an inductive argument.

Given distinct \(i_{1},\ldots,i_{l}\) and distinct \(j_{1},\ldots,j_{l}\), we will write
\[\mathcal{A}_{i_{1} \mapsto j_{1}, i_{2} \mapsto j_{2}, \ldots, i_{l} \mapsto j_{l}} := \{\sigma \in \mathcal{A}: \sigma(i_{k})=j_{k}\ \forall k \in [l]\}\]

To prove Lemma \ref{lemma:1coset}, we will first observe from (\ref{eq:smallprojection}) that if \(\mathcal{A} \subset S_{n}\) is a \(t\)-intersecting family of size at least \(c(n-t)!\) then the characteristic vector \(v_{\mathcal{A}}\) of \(\mathcal{A}\) is close to the subspace \(V_{t}\) spanned by the characteristic vectors of the \(t\)-cosets. We will use this, combined with representation-theoretic arguments, to show that there exists some \(t\)-coset \(\mathcal{C}_{0}\) such that
\[|\mathcal{A} \cap \mathcal{C}_{0}| \geq \omega((n-2t)!)\]
---without loss of generality, \(\mathcal{C}_{0} = \{\sigma \in S_{n}:\ \sigma(1)=1,\ldots,\sigma(t)=t\}\), so
\[|\mathcal{A}_{1 \mapsto 1, 2 \mapsto 2, \ldots, t \mapsto t}| \geq \omega((n-2t)!)\]
Note that the average size of the intersection of \(\mathcal{A}\) with a \(t\)-coset is
\[|\mathcal{A}| / n(n-1)\ldots(n-t+1) = \Theta((n-2t)!)\]
We only know that \(\mathcal{A} \cap \mathcal{C}_{0}\) has size \(\omega\) of the average size. This statement would at first seem to weak to help us. However, for any distinct \(j_{1} \neq 1,j_{2} \neq 2,\ldots\), and \(j_{t} \neq t\), the pair of families
\[\mathcal{A}_{1 \mapsto 1, 2 \mapsto 2,\ldots,t \mapsto t},\quad \mathcal{A}_{1 \mapsto j_{1},2 \mapsto j_{2},\ldots,t \mapsto j_{t}}\]
is \(t\)-cross-intersecting, so we may compare their sizes. In detail, we will deduce from Theorem \ref{thm:tcrossintersecting} that
\[|\mathcal{A}_{1 \mapsto 1, 2 \mapsto 2,\ldots,t \mapsto t}||\mathcal{A}_{1 \mapsto j_{1},2 \mapsto j_{2},\ldots,t \mapsto j_{t}}| \leq ((n-2t)!)^{2}\]
giving \(|\mathcal{A}_{1 \mapsto j_{1},\ldots,t \mapsto j_{t}}| \leq o((n-2t)!)\). Summing over all choices of \(j_{1},\ldots,j_{t}\) will show that all but at most \(o((n-t)!)\) permutations in \(\mathcal{A}\) fix some point of \([t]\), enabling us to complete the proof.\\
\\
\textit{Proof of Lemma \ref{lemma:1coset}:}\\
Let \(\mathcal{A} \subset S_{n}\) be a \(t\)-intersecting family of size at least \(c(n-t)!\); write \(\delta = 1-c < 1\). From (\ref{eq:smallprojection}), we know that the Euclidean distance from \(v_{\mathcal{A}}\) to \(V_{t}\) is small:
\[||P_{V_{t}^{\perp}}(v_{\mathcal{A}})||^{2} \leq \delta(1+O(n^{1/6}))|\mathcal{A}|/n!\]
From (\ref{eq:projectionformula}), the projection of \(v_{\mathcal{A}}\) onto \(V_{t}\) has \(\sigma\)-coordinate:
\[P_{V_{t}}(v_{\mathcal{A}})_{\sigma} = \frac{1}{n!}\sum_{\textrm{fat }\alpha} f^{\alpha}\sum_{\pi \in \mathcal{A}} \chi_{\alpha}(\pi \sigma^{-1})\]
Write \(P_{\sigma} = P_{V_{t}}(v_{\mathcal{A}})_{\sigma}\); then
\[\frac{1}{n!}\left(\sum_{\sigma \in \mathcal{A}} (1-P_{\sigma})^{2}+\sum_{\sigma \notin \mathcal{A}} P_{\sigma}^{2}\right) \leq \delta (1+O(1/n^{1/6}))|\mathcal{A}|/n!\]
i.e.
\[\sum_{\sigma \in \mathcal{A}} (1-P_{\sigma})^{2}+\sum_{\sigma \notin \mathcal{A}} P_{\sigma}^{2} \leq \delta (1+O(1/n^{1/6}))|\mathcal{A}|\]
Choose \(C > 0: |\mathcal{A}|(1-1/n)\delta(1+C/n^{1/6}) \geq\) RHS; then the subset
\[\mathcal{S} := \{\sigma \in \mathcal{A}: (1-P_{\sigma})^{2} < \delta(1+C/n^{1/6})\}\]
has size at least \(|\mathcal{A}|/n\). Similarly, \(P_{\sigma}^{2} < 2\delta/n\) for all but at most
\[n|\mathcal{A}|(1+O(1/n))/2\]
permutations \(\sigma \notin \mathcal{A}\). Provided \(n\) is sufficiently large, \(|\mathcal{A}| \leq (n-t)!\), and therefore the subset \(\mathcal{T} = \{\sigma \notin \mathcal{A}: P_{\sigma}^{2} < 2\delta/n\}\) has size
\[|\mathcal{T}| \geq n! - (n-t)!-n(n-t)!(1+O(1/n))/2\]
The permutations \(\sigma \in \mathcal{S}\) have \(P_{\sigma}\) close to 1; the permutations \(\pi \in \mathcal{T}\) have \(P_{\pi}\) close to 0. Using only our lower bounds on the sizes of \(\mathcal{S}\) and \(\mathcal{T}\), we may prove the following:\\
\\
\textit{Claim:} There exist permutations \(\sigma \in \mathcal{S},\ \pi \in \mathcal{T}\) such that \(\sigma^{-1} \pi\) is a product of at most \(h=h(n)\) transpositions, where \(h = \sqrt{2 (t+2)(n-1) \log n}\).\\
\\
\textit{Proof of Claim:} Define the \textit{transposition graph} \(H\) to be the Cayley graph on \(S_{n}\) generated by the transpositions, i.e. \(V(H) = S_{n}\) and \(\sigma \pi \in E(H)\) iff \(\sigma^{-1} \pi\) is a transposition. We use the following isoperimetric inequality for \(H\), essentially the martingale inequality of Maurey:

\begin{theorem}
Let \(X \subset V(H)\) with \(|X| \geq \gamma n!\) where \(0 < \gamma < 1\). Then for any \(h \geq h_{0} := \sqrt{\tfrac{1}{2}(n-1)\log \tfrac{1}{\gamma}}\),
\[|N_{h}(X)| \geq \left(1-e^{-\frac{2(h-h_{0})^{2}}{n-1}}\right)n!\]
\end{theorem}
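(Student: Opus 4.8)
The plan is to derive this from a sub-Gaussian concentration inequality for the function \(f(\sigma) := d(\sigma, X)\), where \(d\) denotes the graph metric on \(H\); recall that \(d(\sigma,\pi) = n - c(\sigma^{-1}\pi)\), where \(c(\cdot)\) counts cycles (so \(H\) has diameter \(n-1\)), and that \(f\) is \(1\)-Lipschitz with respect to \(d\) by the triangle inequality. Taking \(\sigma \in S_n\) uniformly at random, the aim is to prove
\begin{equation*}
\mathbb{P}\bigl[\,f(\sigma) \geq \mathbb{E}f + s\,\bigr] \leq e^{-2s^{2}/(n-1)}
\quad\text{and}\quad
\mathbb{P}\bigl[\,f(\sigma) \leq \mathbb{E}f - s\,\bigr] \leq e^{-2s^{2}/(n-1)}
\qquad(s \geq 0). \tag{$\star$}
\end{equation*}
Granting \((\star)\), the theorem follows quickly: since \(f(\sigma) = 0\) exactly when \(\sigma \in X\), we have \(\mathbb{P}[f(\sigma) \leq 0] \geq |X|/n! \geq \gamma\), so the lower-tail bound in \((\star)\) with \(s = \mathbb{E}f\) forces \(\gamma \leq e^{-2(\mathbb{E}f)^{2}/(n-1)}\), i.e. \(\mathbb{E}f \leq \sqrt{\tfrac12 (n-1)\log(1/\gamma)} = h_0\); hence for \(h \geq h_0 \geq \mathbb{E}f\) the upper-tail bound with \(s = h - \mathbb{E}f\) gives \(\mathbb{P}[f(\sigma) > h] \leq e^{-2(h-\mathbb{E}f)^{2}/(n-1)} \leq e^{-2(h-h_0)^{2}/(n-1)}\) (since \(t \mapsto e^{-2t^{2}/(n-1)}\) decreases on \([0,\infty)\)), and as \(N_h(X) = \{\sigma : f(\sigma) \leq h\}\), multiplying by \(n!\) and rearranging yields \(|N_h(X)| \geq \bigl(1 - e^{-2(h-h_0)^{2}/(n-1)}\bigr) n!\).

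It remains to establish \((\star)\), which is a Doob-martingale argument essentially due to Maurey. Generate \(\sigma\) by revealing its values \(\sigma(1), \sigma(2), \ldots, \sigma(n-1)\) one at a time (\(\sigma(n)\) being then determined), and set \(M_k = \mathbb{E}[\,f(\sigma) \mid \sigma(1),\ldots,\sigma(k)\,]\), so that \(M_0 = \mathbb{E}f\) and \(M_{n-1} = f(\sigma)\). The heart of the matter is to check that, conditioned on any fixed values of \(\sigma(1),\ldots,\sigma(k-1)\), the variable \(M_k\) lies in an interval of length at most \(1\). To see this, fix those values, let \(R\) be the set of values not among them, and take distinct \(a, b \in R\): the conditional law of \(\sigma\) given \(\{\sigma(k) = a\}\) is uniform over a set \(\mathcal{P}_a\) of permutations, and \(\sigma \mapsto \sigma \cdot (k\ \sigma^{-1}(b))\) is a bijection \(\mathcal{P}_a \to \mathcal{P}_b\) (note \(\sigma^{-1}(b) \neq k\), as \(\sigma(k) = a \neq b\)), hence transports the conditional law given \(\{\sigma(k) = a\}\) to that given \(\{\sigma(k) = b\}\), while replacing \(\sigma\) by a permutation adjacent to it in \(H\) (since \(\sigma^{-1}\cdot\sigma(k\ \sigma^{-1}(b)) = (k\ \sigma^{-1}(b))\) is a transposition). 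As \(f\) is \(1\)-Lipschitz, taking expectations of \(|f(\sigma) - f(\sigma \cdot (k\ \sigma^{-1}(b)))| \leq 1\) gives \(|\mathbb{E}[f \mid \sigma(k) = a] - \mathbb{E}[f \mid \sigma(k) = b]| \leq 1\); since this holds for every pair \(a,b \in R\), the conditional range of \(M_k\) is indeed at most \(1\).

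Consequently each martingale difference \(M_k - M_{k-1} = M_k - \mathbb{E}[M_k \mid \sigma(1),\ldots,\sigma(k-1)]\) is, conditionally, a mean-zero random variable supported in an interval of length \(\leq 1\), so Hoeffding's lemma gives \(\mathbb{E}[\,e^{\lambda(M_k - M_{k-1})} \mid \sigma(1),\ldots,\sigma(k-1)\,] \leq e^{\lambda^{2}/8}\) for all real \(\lambda\); multiplying these over \(k = 1,\ldots,n-1\) yields \(\mathbb{E}[\,e^{\lambda(f(\sigma) - \mathbb{E}f)}\,] \leq e^{(n-1)\lambda^{2}/8}\), and Markov's inequality applied to \(e^{\lambda(f(\sigma) - \mathbb{E}f)}\) with the optimal choice \(\lambda = 4s/(n-1)\) gives the upper tail in \((\star)\), the lower tail following identically with \(f\) replaced by \(-f\). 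This is a standard concentration-of-measure argument and presents no essential obstacle; the one point requiring a little care is the constant \(2/(n-1)\), for which it matters that the martingale differences have bounded \emph{range} — so that Hoeffding's lemma contributes a variance proxy of only \(1/8\) per step — and that the coupling above makes the two conditioned permutations differ by a single transposition. Decomposing \(\sigma\) instead as a product of independent transpositions would make the increments range over a length-\(2\) interval and lose a factor of \(4\) in the exponent.
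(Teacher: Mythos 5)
The paper does not prove this result inline---it simply attributes it to ``the martingale inequality of Maurey'' and cites McDiarmid---and your argument is precisely that standard Doob-martingale proof (reveal $\sigma(1),\ldots,\sigma(n-1)$ one at a time, couple the conditionals $\{\sigma(k)=a\}$ and $\{\sigma(k)=b\}$ by the transposition $\sigma\mapsto\sigma\cdot(k\ \sigma^{-1}(b))$ to bound each increment's range by $1$, apply Hoeffding's lemma and optimize), together with the correct reduction from the two-sided sub-Gaussian bound to the stated isoperimetric form. Everything checks: the bookkeeping that gives $n-1$ increments (and hence the $n-1$ in the exponent), the use of $\mathbb{P}[f\le 0]\ge\gamma$ with $s=\mathbb{E}f$ to show $\mathbb{E}f\le h_0$, the monotonicity step replacing $h-\mathbb{E}f$ by $h-h_0$, and the constant $2/(n-1)$ coming from range-$1$ increments.
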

\begin{flushright}\(\square\)\end{flushright}
For a proof, see for example \cite{McD}. Applying this to the set \(\mathcal{S}\), which has \(|\mathcal{S}| \geq (1-\delta)(n-t)!/n \geq \frac{n!}{n^{t+2}}\) (provided \(n\) is sufficiently large), with \(\gamma = 1/n^{t+2},\ h = 2h_{0}\), gives \(|N_{h}(\mathcal{S})| \geq (1-n^{-(t+2)})n!\), so certainly \(N_{h}(\mathcal{S}) \cap \mathcal{T} \neq \emptyset\), proving the claim.

We now have two permutations \(\sigma \in \mathcal{A}\), \(\pi \notin \mathcal{A}\) which are `close' to one another in \(H\) (differing in only \(O(\sqrt{n \log n})\) transpositions) such that
\[P_{\sigma} > 1-\sqrt{\delta(1+C/n^{1/6})},\quad P_{\pi} < \sqrt{2\delta/n}\]
and therefore
\[P_{\sigma}-P_{\pi} > 1-\sqrt{\delta}-O(1/n^{1/12})\]
Hence, by averaging, there exist two permutations \(\rho,\tau\) that differ by just one transposition and satisfy
\[P_{\rho}-P_{\tau} > (1-\sqrt{\delta}-O(1/n^{1/12}))/h \geq \frac{1-\sqrt{\delta}-O(1/n^{1/12})}{\sqrt{2 (t+2) n \log n}}\]
i.e.
\[\sum_{\alpha \in \mathcal{F}_{n,t}} \frac{f^{\alpha}}{n!} \left(\sum_{\pi \in \mathcal{A}} \chi_{\alpha}(\pi \rho^{-1})-\sum_{\pi \in \mathcal{A}} \chi_{\alpha}(\pi \tau^{-1})\right) \geq \frac{1-\sqrt{\delta}-O(1/n^{1/12})}{\sqrt{2 (t+2) n \log n}}\]
By double translation, we may assume without loss of generality that \(\rho = \textrm{Id}\), \(\tau = (1\ 2)\). So we have:
\[\sum_{\alpha \in \mathcal{F}_{n,t}} \frac{f^{\alpha}}{n!} \left(\sum_{\pi \in \mathcal{A}} \chi_{\alpha}(\pi)-\sum_{\pi \in \mathcal{A}} \chi_{\alpha}(\pi(1\ 2))\right) \geq \frac{1-\sqrt{\delta}-O(1/n^{1/12})}{\sqrt{2 (t+2) n \log n}}\]
The above sum is over \(|\mathcal{F}_{n,t}| = \sum_{s=0}^{t}p(s)\) partitions \(\alpha\) of \(n\); this grows very rapidly with \(t\), but is independent of \(n\) for \(n \geq 2t\). By averaging, there exists some \(\alpha \in \mathcal{F}_{n,t}\) such that 
\begin{eqnarray*}
\frac{f^{\alpha}}{n!} \left(\sum_{\pi \in \mathcal{A}} \chi_{\alpha}(\pi)-\sum_{\pi \in \mathcal{A}} \chi_{\alpha}(\pi(1\ 2))\right) & \geq & \frac{1-\sqrt{\delta}-O(1/n^{1/12})}{\sqrt{2 (t+2) n \log n}\sum_{s=0}^{t}p(s)} \\
& = & \Omega(1/\sqrt{n \log n})
\end{eqnarray*}
Recall that the `fat' irreducible representations \(\{[\alpha]: \alpha \in \mathcal{F}_{n,t}\}\) are precisely the irreducible constituents of \(M^{(n-t,1^{t})}\), so very crudely, for each fat \(\alpha\),
\[f^{\alpha} \leq \dim(M^{(n-t,1^{t})}) = n(n-1)\ldots(n-t+1)\]
Hence,
\[\sum_{\pi \in \mathcal{A}} \chi_{\alpha}(\pi)-\sum_{\pi \in \mathcal{A}} \chi_{\alpha}(\pi(1\ 2)) \geq \Omega(1/\sqrt{n \log n}) (n-t)!\]
But for any \(\alpha \in \mathcal{F}_{n,t}\), we may express the irreducible character \(\chi_{\alpha}\) as a linear combination of permutation characters \(\xi_{\beta}: \beta \in \mathcal{F}_{n,t}\) using the following `determinantal formula' (see \cite{JamesKerber}). For any partition \(\alpha\) of \(n\),
\[\chi_{\alpha} = \sum_{\pi \in S_{n}}\epsilon(\pi) \xi_{\alpha - \textrm{id}+\pi}\]
Here, for \(\alpha = (\alpha_{1},\ldots,\alpha_{l}) \vdash n\), we set \(\alpha_{i}=0\ (l < i \leq n)\), we think of \(\alpha\), \(\textrm{id}\) and \(\pi\) as sequences of length \(n\), and we define addition and subtraction of these sequences pointwise. In general,
\[\alpha - \textrm{id}+\pi = (\alpha_{1}-1+\pi(1),\alpha_{2}-2+\pi(2),\ldots,\alpha_{n}-n+\pi(n))\]
will be a sequence of \(n\) integers with sum \(n\), i.e. a \textit{composition} of \(n\). If \(\lambda\) is a composition of \(n\) with all its terms non-negative, then let \(\bar{\lambda}\) be the partition of \(n\) produced by ordering the terms of \(\lambda\) in non-increasing order, and define \(\xi_{\lambda}=\xi_{\bar{\lambda}}\); if \(\lambda\) has a negative term, we define \(\xi_{\lambda}=0\). If \(\alpha \in \mathcal{F}_{n,t}\), then as \(\alpha_{1} \geq n-t\), any composition occurring in the above sum has first term at least \(n-t\), and therefore \(\xi_{\beta}\) can only occur in the above sum if \(\beta \in \mathcal{F}_{n,t}\). Observe further that since \(\alpha\) has at most \(t+1\) non-zero parts, \(\alpha_{i} = 0\) for every \(i > t+1\), and therefore any permutation \(\pi \in S_{n}\) with \(\xi_{\alpha-\textrm{id}+\pi} \neq 0\) must have \(\pi(i) \geq i\) for every  \(i > t+1\), so must fix \(t+2,t+3,\ldots,\) and \(n\). Therefore, the above sum is only over \(\pi \in S_{\{1,\ldots,t+1\}}\), i.e.
\[\chi_{\alpha} = \sum_{\pi \in S_{t+1}}\epsilon(\pi) \xi_{\alpha - \textrm{id}+\pi}\ \forall \alpha \in \mathcal{F}_{n,t}\]
Therefore, \(\chi_{\alpha}\) is a \((\pm 1)\)-linear combination of at most \((t+1)!\) permutation characters \(\xi_{\beta}\) (\(\beta \in \mathcal{F}_{n,t}\)), possibly with repeats. Hence, by averaging, there exists some \(\beta \in \mathcal{F}_{n,t}\) such that
\begin{eqnarray*}
\left|\sum_{\pi \in \mathcal{A}} \xi_{\beta}(\pi)-\sum_{\pi \in \mathcal{A}} \xi_{\beta}(\pi(1\ 2))\right| & \geq & \Omega(1/\sqrt{n \log n}) \frac{(n-t)!}{(t+1)!} \\
&= & \Omega(1/\sqrt{n \log n})(n-t)!
\end{eqnarray*}
Without loss of generality, we may assume that the above quantity is positive, i.e.
\[\sum_{\pi \in \mathcal{A}} \xi_{\beta}(\pi)-\sum_{\pi \in \mathcal{A}} \xi_{\beta}(\pi(1\ 2)) \geq \Omega(1/\sqrt{n \log n})(n-t)!\]
Let \(\mathbb{T}_{\beta}\) be the set of \(\beta\)-tabloids; the LHS is then
\begin{eqnarray*}
&& \#\{(T,\pi): T \in \mathbb{T}_{\beta},\pi \in \mathcal{A},\pi(T)=T\}\\
&-& \#\{(T,\pi): T \in \mathbb{T}_{\beta}, \pi \in \mathcal{A},\pi(1\ 2)(T)=T\}
\end{eqnarray*}
Interchanging the order of summation, this equals
\[\sum_{T \in \mathbb{T}_{\beta}} \left(\#\{\pi \in \mathcal{A}: \pi(T)=T\}-\#\{\pi \in \mathcal{A}: \pi(1\ 2)(T)=T\}\right)\]
The above summand is zero for all \(\beta\)-tabloids \(T\) with \(1\) and \(2\) in the first row of \(T\) (as then \((1\ 2)T=T\)). Write \(\beta = (n-s,\beta_{2},\ldots,\beta_{l})\), where \(0 \leq s \leq t\). The number of \(\beta\)-tabloids with 1 not in the first row is
\[s(n-1)(n-2)\ldots(n-s+1)/\prod_{i=2}^{l} \beta_{i}!\]
and therefore the number of \(\beta\)-tabloids with 1 or 2 below the first row is at most
\begin{eqnarray*}
2s(n-1)(n-2)\ldots(n-s+1)/\prod_{i=2}^{l} \beta_{i}! & \leq & 2t(n-1)(n-2)\ldots(n-s+1)\\
& = & \frac{2t(n-1)!}{(n-s)!}
\end{eqnarray*}
Hence by averaging, for one such \(\beta\)-tabloid \(T\),
\begin{eqnarray*}
\#\{\pi \in \mathcal{A}: \pi(T)=T\}-\#\{\pi \in \mathcal{A}: \pi(1\ 2)(T)=T\}\\
\geq \Omega(1/\sqrt{n \log n})\frac{(n-s)!}{2t(n-1)!}(n-t)!
\end{eqnarray*}
and therefore the number of permutations in \(\mathcal{A}\) fixing \(T\) satisfies
\[\#\{\pi \in \mathcal{A}: \pi(T)=T\}\geq \Omega(1/\sqrt{n \log n})\frac{(n-s)!}{2t(n-1)!}(n-t)!\]
Without loss of generality, we may assume that the first row of \(T\) consists of the numbers \(\{s+1,\ldots,n\}\). There are \(\beta_{2}!\beta_{3}!\ldots \beta_{l}! \leq s! \leq t!\) permutations of \([s]\) fixing the \(2^{\textrm{nd}}\),\(3^{\textrm{rd}}\),\(\ldots\), and \(l^{\textrm{th}}\) rows of \(T\); any permutation fixing \(T\) must agree with one of these permutations on \([s]\). Hence, there exists a permutation \(\rho\) of \([s]\) such that at least 
\[\Omega(1/\sqrt{n \log n})\frac{(n-s)!(n-t)!}{2t(n-1)!t!}\]
permutations in \(\mathcal{A}\) agree with \(\rho\) on \([s]\). Without loss of generality, we may assume that \(\rho = \textrm{Id}_{[s]}\), so the number of permutations in \(\mathcal{A}\) fixing \([s]\) pointwise satisfies
\begin{eqnarray*}
|\mathcal{A}_{1 \mapsto 1,\ldots,s \mapsto s}| & \geq &\Omega(1/\sqrt{n \log n})\frac{(n-s)!(n-t)!}{2t(n-1)!t!}\\
& = & \Omega(1/\sqrt{n \log n})\frac{(n-s)!(n-t)!}{(n-1)!}
\end{eqnarray*}
We may write \(\mathcal{A}_{1 \mapsto 1,\ldots,s \mapsto s}\) as a disjoint union
\[\mathcal{A}_{1 \mapsto 1,\ldots,s \mapsto s} = \bigcup_{j_{s+1},\ldots,j_{t} > s \textrm{ distinct }} \mathcal{A}_{1 \mapsto 1 \ldots, s \mapsto s, s+1 \mapsto j_{s+1},\ldots,t \mapsto j_{t}}\]
and there are \((n-s)(n-s-1)\ldots(n-t+1)\) choices of \(j_{s+1},\ldots,j_{t}\), so by averaging, there exists a choice such that
\[|\mathcal{A}_{1 \mapsto 1 \ldots, s \mapsto s, s+1 \mapsto j_{s+1},\ldots,t \mapsto j_{t}}| \geq \Omega(1/\sqrt{n \log n})\frac{((n-t)!)^{2}}{(n-1)!}\]
By translation, we may assume without loss of generality that \(j_{k}=k\) for each \(k\), so
\begin{eqnarray*}
|\mathcal{A}_{1 \mapsto 1, 2 \mapsto 2, \ldots, t \mapsto t}| & \geq & \Omega(1/\sqrt{n \log n})\frac{((n-t)!)^{2}}{(n-1)!}\\
& = & \Omega(\sqrt{n/\log n}) (n-2t)!\\
& = & \omega((n-2t)!)
\end{eqnarray*}
We will use this to show that the number of permutations in \(\mathcal{A}\) with no fixed point in \([t]\) is small. We may write
\[\mathcal{A} \setminus (\mathcal{A}_{1\mapsto 1} \cup \ldots \cup \mathcal{A}_{t \mapsto t}) = \bigcup_{j_{1},\ldots,j_{t} \textrm{ distinct }: j_{k} \neq k \ \forall k \in [t]} \mathcal{A}_{1 \mapsto j_{1}, \ldots, t \mapsto j_{t}}\]
We now show that each \(\mathcal{A}_{1 \mapsto j_{1}, \ldots, t \mapsto j_{t}}\) is small using Theorem \ref{thm:tcrossintersecting}. Let \(J = \{j_{1},\ldots,j_{t}\}\). Notice that \(\mathcal{E}:=\mathcal{A}_{1 \mapsto 1,\ldots, t\mapsto t},\ \mathcal{F}:=\mathcal{A}_{1 \mapsto j_{1}, \ldots, t \mapsto j_{t}}\) is a \(t\)-cross-intersecting pair of families, so for any \(\sigma \in \mathcal{E}\) and \(\pi \in \mathcal{F}\), there are \(t\) distinct points \(i_{1},i_{2}\ldots,i_{t} > t\) such that \(\sigma(i_{k})=\pi(i_{k}) \notin [t] \cup J\) for each \(k \in [t]\). But then
\[(1\ j_{1})(2\ j_{2}) \ldots (t\ j_{t})\pi(i_{k}) = \sigma(i_{k})\quad \textrm{for each } k \in [t]\]
so letting \(\mathcal{G}:=(1\ j_{1})(2\ j_{2})\ldots(t\ j_{t})\mathcal{F}\), the pair of families \(\mathcal{E},\mathcal{G}\) fix \([t]\) pointwise and \(t\)-cross-intersect on \(\{t+1,t+2,\ldots,n\}\). Deleting \(1,\ldots,t\) we obtain a \(t\)-cross-intersecting pair \(\mathcal{E}',\mathcal{G}'\) of subsets of \(S_{\{t+1,\ldots,n\}}\). By Theorem \ref{thm:tcrossintersecting},
\[|\mathcal{A}_{1 \mapsto 1,\ldots,t \mapsto t}||\mathcal{A}_{1 \mapsto j_{1},\ldots,t \mapsto j_{t}}| = |\mathcal{E}||\mathcal{G}| = |\mathcal{E}'||\mathcal{G}'| \leq ((n-2t)!)^{2}\]
Since 
\[|\mathcal{A}_{1 \mapsto 1,\ldots,t \mapsto t}| \geq \omega((n-2t)!)\]
we have
\[|\mathcal{A}_{1 \mapsto j_{1}, \ldots, t \mapsto j_{t}}| \leq o((n-2t)!)\]
There are \(\leq n(n-1)(n-2)\ldots(n-t+1)\) possible choices of \(j_{1},\ldots,j_{t}\), and therefore the number of permutations in \(\mathcal{A}\) with no fixed point in \([t]\) satisfies
\begin{eqnarray*}
|\mathcal{A}\setminus (\mathcal{A}_{1\mapsto 1} \cup \mathcal{A}_{2 \mapsto 2} \cup \ldots \cup \mathcal{A}_{t \mapsto t})| & \leq & o((n-2t)!) n(n-1)\ldots(n-t+1)\\
& = & o((n-t)!)
\end{eqnarray*}
Since \(|\mathcal{A}| \geq c(n-t)!\), we have
\[|\mathcal{A}_{1\mapsto 1} \cup \mathcal{A}_{2 \mapsto 2} \cup \ldots \cup \mathcal{A}_{t\mapsto t}| \geq (c-o(1))(n-t)!\]
By averaging, there exists some \(i \in [t]\) such that
\[|\mathcal{A}_{i \mapsto i}| \geq (c-o(1))(n-t)!/t\]
We may assume that \(i=1\), so \(|\mathcal{A}_{1 \mapsto 1}| \geq (c-o(1))(n-t)!/t\). Now, using the same trick as before, we may use Theorem \ref{thm:tcrossintersecting} to show that \(|\mathcal{A} \setminus \mathcal{A}_{1 \mapsto 1}| \leq O((n-t-1)!)\). Indeed, write \(\mathcal{A} \setminus \mathcal{A}_{1 \mapsto 1}\) as a disjoint union
\[\mathcal{A} \setminus \mathcal{A}_{1 \mapsto 1} = \bigcup_{j \neq 1} \mathcal{A}_{1 \mapsto j}\]
We will show that each \(\mathcal{A}_{1 \mapsto j}\) is small. Notice as before that the pair of families \(\mathcal{A}_{1\mapsto 1},\ (1\ j)\mathcal{A}_{1 \mapsto j}\) fixes 1 and \(t\)-cross-intersects on the domain \(\{2, \ldots ,n\}\), so Theorem \ref{thm:tcrossintersecting} gives
\[|\mathcal{A}_{1 \mapsto 1}||\mathcal{A}_{1 \mapsto j}| \leq ((n-t-1)!)^{2}\]
Since \(|\mathcal{A}_{1 \mapsto 1}| \geq \Omega((n-t)!)\), we obtain \(|\mathcal{A}_{1 \mapsto j}| \leq O((n-t-2)!)\), and therefore
\[|\mathcal{A} \setminus \mathcal{A}_{1 \mapsto 1}| = \sum_{j \neq 1}|\mathcal{A}_{1 \mapsto j}| \leq O((n-t-1)!)\]
proving Lemma \ref{lemma:1coset}.
\begin{flushright}\(\square\)\end{flushright}
\textit{Proof of Theorem \ref{thm:stability}:}\\
By induction on \(t\). The \(t=1\) case is the same as that of Lemma \ref{lemma:1coset}. Assume the theorem is true for \(t-1\); we will prove it for \(t\). Let \(\mathcal{A} \subset S_{n}\) be a \(t\)-intersecting family of size at least \(c(n-t)!\). By Lemma \ref{lemma:1coset}, there exist \(i\) and \(j\) such that \(|\mathcal{A} \setminus \mathcal{A}_{i \mapsto j}| \leq  O((n-t-1)!)\). Without loss of generality we may assume that \(i=j=1\), so \(|\mathcal{A} \setminus \mathcal{A}_{1 \mapsto 1}| \leq  O((n-t-1)!)\). Hence, \(|\mathcal{A}_{1\mapsto 1}| \geq |\mathcal{A}| - O((n-t-1)!)\). Deleting \(1\) from each permutation in \(\mathcal{A}_{1\mapsto 1}\), we obtain a \((t-1)\)-intersecting family \(\mathcal{A}' \subset S_{\{2,3,\ldots,n\}}\) of size \(\geq (c-O(1/n))(n-t)!\). Choose any positive constant \(c' < c\); then provided \(n\) is sufficiently large, we have \(|\mathcal{A}'| \geq c'(n-t)!\). By the induction hypothesis, there exists a \((t-1)\)-coset \(\mathcal{C}'\) of \(S_{2,\ldots,n}\) such that \(|\mathcal{A}'\setminus \mathcal{C}'| \leq O((n-t-1)!)\). Then if \(\mathcal{C}\) is the \(t\)-coset obtained from \(\mathcal{C}'\) by adjoining \(1 \mapsto 1\), we have \(|\mathcal{A} \setminus \mathcal{C}| \leq O((n-t-1)!)\). This completes the induction and proves Theorem \ref{thm:stability}.
\begin{flushright}\(\square\)\end{flushright}

We now use our rough stability result to prove an exact stability result. First, we need some more definitions.

Let \(d_{n}\) be the number of \textit{derangements} of \([n]\) (permutations of \([n]\) without fixed points). It is well known that \(d_{n} = (1/e+o(1))n!\).

Following Cameron and Ku \cite{cameron}, given a permutation \(\rho \in S_{n}\) and \(i \in [n]\), we define the \textit{i-fix} of \(\rho\) to be the permutation \(\rho_{i}\) which fixes \(i\), maps the preimage of \(i\) to the image of \(i\), and agrees with \(\rho\) at all other points of \([n]\), i.e.
\[\rho_{i}(i) = i;\ \rho_{i}(\rho^{-1}(i)) = \rho(i);\ \rho_{i}(k)=\rho(k)\ \forall k \neq i,\rho^{-1}(i)\]
In other words, \(\rho_{i} = \rho (\rho^{-1}(i)\ i)\). We inductively define
\[\rho_{i_{1},\ldots,i_{l}} = (\rho_{i_{1},\ldots,i_{l-1}})_{i_{l}}\]
Notice that if \(\sigma\) fixes \(j\), then \(\sigma\) agrees with \(\rho_{j}\) wherever it agrees with \(\rho\).

\begin{theorem}\label{thm:hiltonmilnertype}
For \(n\) sufficiently large depending on \(t\), if \(\mathcal{A} \subset S_{n}\) is a \(t\)-intersecting family which is not contained within a \(t\)-coset, then \(\mathcal{A}\) is no larger than the family
\begin{eqnarray*}
\mathcal{D}& = &\{\sigma \in S_{n}:\ \sigma(i)=i\ \forall i \leq t,\ \sigma(j)=j\ \textrm{for some}\ j > t+1\} \\
&&\cup \{(1\ t+1),(2\ t+1),\ldots,(t \ t+1)\}
\end{eqnarray*}
which has size \((n-t)!-d_{n-t}-d_{n-t-1}+t = (1-1/e+o(1))(n-t)!\). If \(\mathcal{A}\) is the same size as \(\mathcal{D}\), then \(\mathcal{A}\) is a double translate of \(\mathcal{D}\), i.e. \(\mathcal{A}=\pi \mathcal{D} \tau\) for some \(\pi,\tau \in S_{n}\).
\end{theorem}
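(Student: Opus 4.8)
The plan is as follows. Whether proving the bound $|\mathcal{A}|\le|\mathcal{D}|$ (by contradiction) or analysing the equality case (by hypothesis), I may assume $|\mathcal{A}|\ge |\mathcal{D}| = (1-1/e+o(1))(n-t)!$, so $|\mathcal{A}|\ge\tfrac{1}{2}(n-t)!$ and Theorem~\ref{thm:stability} applies: some $t$-coset $\mathcal{C}$ satisfies $|\mathcal{A}\setminus\mathcal{C}| = O((n-t-1)!)$, and after a double translation (which preserves size, the $t$-intersecting property, and the property of lying in no $t$-coset) I may take $\mathcal{C}$ to be the pointwise stabiliser of $[t]$. Writing $\mathcal{A}_1 = \mathcal{A}\cap\mathcal{C}$ and $\mathcal{B} = \mathcal{A}\setminus\mathcal{C}$, we have $|\mathcal{B}| = O((n-t-1)!)$ and, since $\mathcal{A}$ lies in no $t$-coset, $\mathcal{B}\ne\emptyset$. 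Deleting the fixed action on $[t]$ identifies $\mathcal{A}_1$ with a subfamily $\mathcal{A}_0$ of $S_{\{t+1,\ldots,n\}}$, with $|\mathcal{A}_0| = |\mathcal{A}|-|\mathcal{B}| \ge (1-1/e-o(1))(n-t)!$.

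Next I would record the constraint $\mathcal{B}$ forces on $\mathcal{A}_0$. For $\pi\in\mathcal{B}$ set $r(\pi) = |\{i\le t : \pi(i)\ne i\}|\ge1$. Any $\sigma\in\mathcal{A}_1$ agrees with $\pi$ exactly on the $\pi$-fixed points of $[t]$, so by $t$-intersection on at least $r(\pi)$ points of $\{t+1,\ldots,n\}$; since $\pi_{1,\ldots,t}$ fixes $[t]$ --- hence restricts to a permutation $\pi^{*}$ of $\{t+1,\ldots,n\}$ --- and $\sigma$ agrees with $\pi_{1,\ldots,t}$ wherever it agrees with $\pi$, every $\tau\in\mathcal{A}_0$ agrees with $\pi^{*}$ on at least $r(\pi)$ points. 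A derangement count shows that a single $\pi$ with $r(\pi)\ge2$ would give $|\mathcal{A}_0|\le(n-t)! - d_{n-t} - (n-t)d_{n-t-1} = (1-2/e+o(1))(n-t)!$, contradicting the lower bound on $|\mathcal{A}_0|$; hence every $\pi\in\mathcal{B}$ moves exactly one point $a_\pi$ of $[t]$, to some $b_\pi>t$, with $c_\pi := \pi^{-1}(a_\pi)>t$. Furthermore $\sigma$ cannot agree with $\pi$ at $c_\pi$ (this would send $c_\pi>t$ to $a_\pi\le t$, impossible for $\sigma\in\mathcal{C}$), so every $\tau\in\mathcal{A}_0$ agrees with $\pi^{*}$ at some point $\ne c_\pi$. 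Since the set of $\tau\in S_{\{t+1,\ldots,n\}}$ all of whose agreements with $\pi^{*}$ lie in $\{c_\pi\}$ has size exactly $d_{n-t}+d_{n-t-1}$, this yields $|\mathcal{A}_0|\le(n-t)!-d_{n-t}-d_{n-t-1}$, hence $|\mathcal{A}|\le(n-t)!-d_{n-t}-d_{n-t-1}+|\mathcal{B}|$.

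The crux --- and the step I expect to be the main obstacle --- is to improve the trivial bound $|\mathcal{B}| = O((n-t-1)!)$ to $|\mathcal{B}|\le t$. The mechanism is that two distinct members of $\mathcal{B}$ cut $\mathcal{A}_0$ down by more than they add to $|\mathcal{B}|$ unless they are \emph{aligned}, i.e.\ move the same point of $[t]$ to the same target via the same ``shortcut'' $\pi^{*}$. Quantitatively, with $C_\pi := \{\tau : \tau$ agrees with $\pi^{*}$ only at $c_\pi\}$ (so $|C_\pi| = d_{n-t}+d_{n-t-1}$ and $\mathcal{A}_0\subseteq S_{\{t+1,\ldots,n\}}\setminus\bigcup_{\pi\in\mathcal{B}}C_\pi$), a derangement estimate shows $|C_\pi\cup C_{\pi'}|$ exceeds $d_{n-t}+d_{n-t-1}$ by a constant fraction of $(n-t-1)!$ whenever $\pi,\pi'$ are not aligned; to upgrade this to the exact bound $|\mathcal{B}|\le t$ one also applies Theorem~\ref{thm:tcrossintersecting} --- as in the proof of Lemma~\ref{lemma:1coset} --- to pairs of subfamilies of $\mathcal{A}$ obtained by prescribing the images of $1,\ldots,t$ and deleting suitable points, thereby controlling $|\{\rho\in\mathcal{A} : \rho$ fixes $[t]\setminus\{a\}$, $\rho(a)\ne a\}|$ for each $a\le t$. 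This is substantially heavier than Cameron and Ku's $t=1$ argument, since a member of $\mathcal{B}$ is now determined not by the image of a single moved point but by an entire permutation; one must, in particular, separately rule out ``mixed'' configurations in which members of $\mathcal{B}$ move different points of $[t]$ or aim at different targets. Granting $|\mathcal{B}|\le t$ gives $|\mathcal{A}|\le(n-t)!-d_{n-t}-d_{n-t-1}+t = |\mathcal{D}|$.

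Finally, for the equality statement, suppose $|\mathcal{A}| = |\mathcal{D}|$; then all the inequalities above are tight, which pins the structure down completely: $|\mathcal{B}| = t$ with its members moving $t$ distinct points of $[t]$, all of them transpositions $(a\ p)$ for one common $p>t$ (forcing every $\pi^{*}$ to be the identity), and $\mathcal{A}_0$ equal to the unique near-extremal family $\{\tau\in S_{\{t+1,\ldots,n\}} : \tau$ fixes some point $\ne p\}$ --- uniqueness coming from the equality clauses in the counting arguments and in Theorem~\ref{thm:tcrossintersecting}. Relabelling $p$ to $t+1$ by a double translation fixing $[t]$ identifies $\mathcal{A}$ with $\mathcal{D}$, and composing with the translation made at the outset exhibits $\mathcal{A} = \pi\mathcal{D}\tau$ for some $\pi,\tau\in S_n$, as required.
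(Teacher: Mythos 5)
Your scaffolding lines up with the paper's: apply Theorem~\ref{thm:stability}, normalise $\mathcal{C}$ to the pointwise stabiliser of $[t]$, show that every member of $\mathcal{A}\setminus\mathcal{C}$ moves exactly one point of $[t]$, and deduce $|\mathcal{A}\cap\mathcal{C}|\leq(n-t)!-d_{n-t}-d_{n-t-1}$. The gap is exactly the step you flag as the main obstacle, and your sketch for it does not close. With ``aligned'' defined as same moved point $a_\pi$, same target, and same $\pi^{*}$, any two distinct members of $\mathcal{A}\setminus\mathcal{C}$ are automatically non-aligned; yet the extremal family $\mathcal{D}$ contains $t$ permutations outside $\mathcal{C}$ that share $\pi^{*}$ (the identity) and $c_\pi=t+1$ while moving $t$ different points of $[t]$, so a uniform $\Omega((n-t-1)!)$ penalty for every non-aligned pair cannot exist. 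The hinted application of Theorem~\ref{thm:tcrossintersecting} would, as in Lemma~\ref{lemma:1coset}, give an $O((n-t-1)!)$ bound on the number of $\rho\in\mathcal{A}$ moving a given point of $[t]$, which you already have; it does not produce the exact bound $t$. Finally, even granting $|\mathcal{A}\setminus\mathcal{C}|\leq t$ numerically, the equality case needs the structural fact that $\mathcal{A}\setminus\mathcal{C}$ consists of transpositions with a common target, which you invoke but do not derive.

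The paper closes this step with a dichotomy that your plan lacks. After double-translating so that $(1\ t+1)\in\mathcal{A}\setminus\mathcal{C}$, suppose some $\rho\in\mathcal{A}\setminus\mathcal{C}$ is not a transposition $(i\ t+1)$; then $\rho(j)\neq j$ for some $j\geq t+2$, and a derangement count shows a further $d_{n-t-1}$ permutations are excluded from $\mathcal{A}\cap\mathcal{C}$, forcing $|\mathcal{A}\setminus\mathcal{C}|\geq d_{n-t-1}+t=(1/e+o(1))(n-t-1)!$. But then the trivial bound $|\mathcal{H}|\leq n!/m!$ for an $m$-intersecting family implies $\mathcal{A}\setminus\mathcal{C}$ is not $(\log n)$-intersecting, so it contains $\pi,\tau$ agreeing on at most $\log n$ points; apart from $O((\log n)(n-t-1)!)$ exceptions, every $\sigma\in\mathcal{A}\cap\mathcal{C}$ must agree with both $\pi_p$ and $\tau_q$ (the $i$-fixes at the moved points of $[t]$) at two distinct points of $\{t+1,\ldots,n\}$, and a probability estimate gives $|\mathcal{A}\cap\mathcal{C}|\leq((1-1/e)^{2}+o(1))(n-t)!$, contradicting the lower bound on $|\mathcal{A}|$. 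This yields the structural conclusion $\mathcal{A}\setminus\mathcal{C}\subseteq\{(i\ t+1):i\in[t]\}$, from which the size bound and the equality characterisation follow at once, with no direct appeal to Theorem~\ref{thm:tcrossintersecting}.
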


\begin{proof}
Suppose \(\mathcal{A} \subset S_{n}\) is a \(t\)-intersecting family which is not contained within a \(t\)-coset, and has size
\[|\mathcal{A}| \geq (n-t)!-d_{n-t}-d_{n-t-1}+t = (1-1/e+o(1))(n-t)!.\]
Applying Theorem \ref{thm:stability} with any constant \(c\) such that \(0 < c < 1-1/e\), we see that (provided \(n\) is sufficiently large) there exists a \(t\)-coset \(\mathcal{C}\) such that 
\[|\mathcal{A} \setminus \mathcal{C}| \leq O(1/n)(n-t)!\]
By double translation, without loss of generality we may assume that \(\mathcal{C} = \{\sigma \in S_{n}: \sigma(1)=1,\ldots,\sigma(t)=t\}\). We have:
\begin{eqnarray}
\label{eq:boundC}
|\mathcal{A} \cap \mathcal{C}|& \geq& (n-t)!-d_{n-t}-d_{n-t-1}+t-O(1/n)(n-t)!\nonumber \\
& = &(1-1/e+o(1))(n-t)!
\end{eqnarray}

We now claim that every permutation in \(\mathcal{A}\setminus \mathcal{C}\) fixes exactly \(t-1\) points of \([t]\). Suppose for a contradiction that \(\mathcal{A}\) contains a permutation \(\tau\) fixing at most \(t-2\) points of \([t]\). Then every permutation in \(\mathcal{A} \cap \mathcal{C}\) must agree with \(\tau\) on at least 2 points of \(\{t+1,\ldots,n\}\), so
\[|\mathcal{A} \cap \mathcal{C}| \leq {n-t \choose 2}(n-t-2)! = \tfrac{1}{2}(n-t)!\]
contradicting (\ref{eq:boundC}), provided \(n\) is sufficiently large.

Since we are assuming that \(\mathcal{A}\) is not contained within a \(t\)-coset, \(\mathcal{A} \setminus \mathcal{C}\) contains some permutation \(\tau\); \(\tau\) must fix all points of \([t]\) except for one. By double translation, we may assume that \(\tau = (1\ t+1)\). We will show that under these hypotheses, \(\mathcal{A}=\mathcal{D}\).

Every permutation in \(\mathcal{A} \cap \mathcal{C}\) must \(t\)-intersect \((1 \ t+1)\) and must therefore have at least one fixed point \(> t+1\), i.e. \(\mathcal{A} \cap \mathcal{C}\) is a subset of the family
\[\mathcal{E}:=\{\sigma \in S_{n}: \sigma(i)=i \ \forall i \in [t], \ \sigma(j)=j \textrm{ for some } j > t+1\}\]
which has size
\[(n-t)!-d_{n-t}-d_{n-t-1}\]
We now make the following observation:\\
\\
\textit{Claim:} \(\mathcal{A} \setminus \mathcal{C}\) may only contain the transpositions \(\{(i\ t+1)\ : i \in [t]\}\).\\
\\
\textit{Proof of Claim:}\\
Suppose for a contradiction that \(\mathcal{A} \setminus \mathcal{C}\) contains a permutation \(\rho\) not of this form. Then \(\rho(j) \neq j\) for some \(j \geq t+2\).  We will show that there are at least \(d_{n-t-1}\) permutations in \(\mathcal{E}\) which fix \(j\) and disagree with \(\rho\) at every point of \(\{t+1,t+2,\ldots,n\}\), and therefore cannot \(t\)-intersect \(\rho\). Let \(l\) be the unique point of \([t]\) not fixed by \(\rho\). If \(\sigma\) fixes both \(l\) and \(j\), then \(\sigma\) agrees with \(\rho_{j,l} = (\rho_{j})_{l}\) wherever it agrees with \(\rho\). Notice that \(\rho_{j,l}\) fixes \(1,2,\ldots,t\) and \(j\). There are exactly \(d_{n-t-1}\) permutations in \(\mathcal{E}\) which fix \(j\) and disagree with \(\rho_{j,l}\) at every point of \(\{t+1,t+2,\ldots,n\}\setminus\{j\}\); each disagrees with \(\rho\) at every point of \(\{t+1,t+2,\ldots,n\}\). So none \(t\)-intersect \(\rho\), so none are in \(\mathcal{A}\), and therefore
\[|\mathcal{A} \cap \mathcal{C}| \leq |\mathcal{E}| - d_{n-t-1} = (n-t)!-d_{n-t}-2d_{n-t-1}\]
Since we are assuming that \(|\mathcal{A}| \geq (n-t)!-d_{n-t}-d_{n-t-1}+t\), this means that
\[|\mathcal{A} \setminus \mathcal{C}| \geq d_{n-t-1}+t = (1/e+o(1))(n-t-1)!\]
Notice that for any \(m \leq n\) we have the following trivial upper bound on the size of an \(m\)-intersecting family \(\mathcal{H} \subset S_{n}\):
\[|\mathcal{H}| \leq {n \choose m}(n-m)! = n!/m!\]
since every permutation in \(\mathcal{H}\) must agree with a fixed permutation in \(\mathcal{H}\) in at least \(m\) places.

Hence, \(\mathcal{A}\setminus \mathcal{C}\) cannot be \((\log n)\)-intersecting and therefore contains two permutations \(\pi,\tau\) agreeing on at most \(\log n\) points. The number of permutations fixing \([t]\) pointwise and agreeing with both \(\pi\) and \(\tau\) at one of these \(\log n\) points is therefore at most \((\log n)(n-t-1)!\). All other permutations in \(\mathcal{A} \cap \mathcal{C}\) agree with \(\pi\) and \(\tau\) at two separate points of \(\{t+1,\ldots,n\}\), and by the above argument, the same holds for \(\pi_{p}\) and \(\tau_{q}\), where \(p\) and \(q\) are the points of \([t]\) shifted by \(\pi\) and \(\tau\) respectively. The number of permutations in \(\mathcal{C}\) that agree with \(\pi_{p}\) and \(\tau_{q}\) at two separate points of \(\{t+1,\ldots,n\}\) is at most \(((1-1/e)^{2} + o(1))(n-t)!\) (it is easily checked that given two fixed permutations, the probability that a uniform random permutation agrees with them at separate points is at most \((1-1/e)^{2} + o(1)\)), which implies that
\begin{eqnarray*}
|\mathcal{A} \cap \mathcal{C}| & \leq & ((1-1/e)^{2} + o(1))(n-t)! + (\log n) (n-t-1)!\\
& = & ((1-1/e)^{2}+o(1))(n-t)!
\end{eqnarray*}
contradicting (\ref{eq:boundC}), provided \(n\) is sufficiently large. This proves the claim.

Since we are assuming \(|\mathcal{A}| \geq |\mathcal{E}| + t\), we must have equality, so \(\mathcal{A}=\mathcal{D}\), proving Theorem \ref{thm:hiltonmilnertype}.
\end{proof}

Similar arguments give the following stability results for \(t\)-cross-intersecting families. Say two pairs of families \((\mathcal{A},\mathcal{B})\), \((\mathcal{C},\mathcal{D})\) in \(S_{n}\) are \textit{isomorphic} if there exist permutations \(\pi,\rho \in S_{n}\) such that \(\mathcal{A} = \pi \mathcal{C} \rho\) and \(\mathcal{B} = \pi \mathcal{D} \rho\). We have:

\begin{theorem} \label{thm:stabilitycrossmin}
F \(n\) sufficiently large depending on \(t\), if \(\mathcal{A},\mathcal{B} \subset S_{n}\) are \(t\)-cross-intersecting but not both contained within the same \(t\)-coset, then
\[\min(|\mathcal{A}|,|\mathcal{B}|) \leq (n-t)!-d_{n-t}-d_{n-t-1}+t\]
with equality iff \((\mathcal{A},\mathcal{B})\) is isomorphic to the pair of families
\[\{\sigma: \sigma(i)=i\ \forall i \leq t,\ \sigma(j)=\tau(j)\ \textrm{for some}\ j > t+1\} \cup \{(i\ t+1):\ i \in [t]\}\]
\[\{\sigma: \sigma(i)=i\ \forall i \leq t,\ \sigma(j)=j\ \textrm{for some}\ j > t+1\} \cup \{(1i)\tau(1i): i \in [t]\}\]
where \(\tau(1) \neq 1\) and if \(t \geq 2\), \(\tau\) fixes \(2,3,\ldots,t\) and at least two points \(> t+1\), whereas if \(t=1\), \(\tau\) intersects \((1\ 2)\). 
\end{theorem}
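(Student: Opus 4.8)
The plan is to run the proof of Theorem~\ref{thm:hiltonmilnertype}, carrying the asymmetry between the two families throughout. Assume without loss of generality that \(|\mathcal{B}|=\min(|\mathcal{A}|,|\mathcal{B}|)\); to prove the bound and classify the equality case, assume also \(|\mathcal{B}|\ge(n-t)!-d_{n-t}-d_{n-t-1}+t\), so that \(|\mathcal{A}|\ge|\mathcal{B}|\) and both families have size \((1-1/e+o(1))(n-t)!\). The first step is a \(t\)-cross-intersecting analogue of Theorem~\ref{thm:stability}: there is a single \(t\)-coset \(\mathcal{C}\) with \(|\mathcal{A}\setminus\mathcal{C}|,|\mathcal{B}\setminus\mathcal{C}|=O((n-t-1)!)\); by double translation we may then take \(\mathcal{C}=\{\sigma:\sigma(i)=i\ \forall i\le t\}\).

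For this rough cross-stability I would argue as follows. Since \(\sigma^{-1}\pi\) fixes at least \(t\) points for all \(\sigma\in\mathcal{A},\pi\in\mathcal{B}\), the weight matrix \(A\) of the graph \(Y\) of~\cite{friedgutpilpel} vanishes on \(\mathcal{A}\times\mathcal{B}\), so \(v_{\mathcal{A}}^{\top}Av_{\mathcal{B}}=0\). Expanding \(v_{\mathcal{A}},v_{\mathcal{B}}\) in an orthonormal eigenbasis of \(A\) and using \(|\lambda_i|\le|\omega_{n,t}|\) for \(i>1\), with equality only on the non-constant part of \(V_t\) and \(|\lambda_i|=O(|\omega_{n,t}|/n^{1/6})\) otherwise, a Cauchy--Schwarz computation (the cross-intersecting analogue of Lemma~\ref{lemma:stabilityhoffman}) gives \(|\mathcal{A}||\mathcal{B}|\le((n-t)!)^2(1+o(1))\) and, crucially, that \(\|P_{V_t^{\perp}}v_{\mathcal{A}}\|^2\) and \(\|P_{V_t^{\perp}}v_{\mathcal{B}}\|^2\) are each at most \((1-\mu+o(1))\) times the corresponding mass, where \(\mu=|\mathcal{A}||\mathcal{B}|/((n-t)!)^2\ge|\mathcal{B}|^2/((n-t)!)^2\ge(1-1/e)^2(1+o(1))\) is bounded away from \(0\). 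Since the resulting constant \(1-\mu\) is bounded away from \(1\), the transposition-graph / determinantal-formula part of the proof of Lemma~\ref{lemma:1coset} applies to \(\mathcal{A}\), giving \(|\mathcal{A}_{1\mapsto1,\ldots,t\mapsto t}|=\omega((n-2t)!)\) for a suitable \(t\)-coset; the rest of that proof then goes through with the genuinely \(t\)-cross-intersecting pairs \(\mathcal{A}_{\cdots},\mathcal{B}_{\cdots}\) in the applications of Theorem~\ref{thm:tcrossintersecting}, showing first that all but \(o((n-t)!)\) permutations of \(\mathcal{B}\) fix some point of \([t]\), so that some \(|\mathcal{B}_{1\mapsto1}|=\Omega((n-t)!)\), and then, feeding this back, that all but \(O((n-t-1)!)\) permutations of \(\mathcal{A}\), and symmetrically of \(\mathcal{B}\), fix that point \(1\). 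Deleting the point \(1\) gives a \((t-1)\)-cross-intersecting pair \(\mathcal{A}',\mathcal{B}'\subset S_{n-1}\) of sizes \((1-1/e-o(1))(n-t)!\); by induction on \(t\) (the case \(t=1\) being the same argument without the deletion) these lie, up to \(O((n-t-1)!)\) permutations, in a \emph{common} \((t-1)\)-coset, which lifts to the required \(\mathcal{C}\).

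With \(\mathcal{C}=\{\sigma:\sigma(i)=i\ \forall i\le t\}\) fixed, I follow the proof of Theorem~\ref{thm:hiltonmilnertype}. Every permutation of \(\mathcal{A}\setminus\mathcal{C}\) and of \(\mathcal{B}\setminus\mathcal{C}\) fixes exactly \(t-1\) points of \([t]\): one fixing at most \(t-2\) of them would force the other family's intersection with \(\mathcal{C}\) (of size \((1-1/e+o(1))(n-t)!\)) to agree with it on at least \(2\) points of \(\{t+1,\ldots,n\}\), bounding that intersection by \(\binom{n-t}{2}(n-t-2)!=\tfrac12(n-t)!\), a contradiction. Since \(\mathcal{A},\mathcal{B}\) are not both inside \(\mathcal{C}\), at least one of \(\mathcal{A}\setminus\mathcal{C},\mathcal{B}\setminus\mathcal{C}\) is non-empty; picking a permutation \(\rho\) in one of them, the other family's intersection with \(\mathcal{C}\) must \(t\)-intersect \(\rho\), hence lies in \(\{\sigma\in\mathcal{C}:\sigma(j)=\rho(j)\ \text{for some}\ j>t\}\), whose size is exactly \((n-t)!-d_{n-t}-d_{n-t-1}\) for \emph{every} such \(\rho\) by the derangement identity \(\sum_{k}(-1)^k\binom{m-1}{k}(m-k)!=d_m+d_{m-1}\). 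Then the \(i\)-fix / derangement argument of the Claim in the proof of Theorem~\ref{thm:hiltonmilnertype}, run in both directions with a case analysis on which point of \([t]\) is shifted, shows: any permutation in \(\mathcal{A}\setminus\mathcal{C}\) moving some point \(>t+1\), or two permutations in \(\mathcal{B}\setminus\mathcal{C}\) with differing actions on \(\{t+1,\ldots,n\}\) beyond the relabelling of the shifted point, would delete at least \(d_{n-t-1}\) permutations from the complementary family's intersection with \(\mathcal{C}\), and so, together with the trivial bound \(|\mathcal{H}|\le n!/m!\) for \(m\)-intersecting families and the fact that neither \(\mathcal{A}\setminus\mathcal{C}\) nor \(\mathcal{B}\setminus\mathcal{C}\) can be \((\log n)\)-intersecting once it has size \(\ge d_{n-t-1}+t\), contradict the size hypotheses. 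Hence (after translating within \([t]\) and within \(\{t+1,\ldots,n\}\)) the two families are as in the displayed pair, for a suitable \(\tau\): one has outside part a subset of \(\{(i\ t+1):i\in[t]\}\) and inside part the full family \(\{\sigma\in\mathcal{C}:\sigma(j)=\tau(j)\ \exists j>t+1\}\), the other has outside part the corresponding \(\{(1i)\tau(1i):i\in[t]\}\) and inside part the full \(\mathcal{E}=\{\sigma\in\mathcal{C}:\sigma(j)=j\ \exists j>t+1\}\). Size bookkeeping with \(|\mathcal{B}|\le|\mathcal{A}|\) then yields \(|\mathcal{B}|\le|\mathcal{E}|+t=(n-t)!-d_{n-t}-d_{n-t-1}+t\), with equality forcing \((\mathcal{A},\mathcal{B})\) to be a double translate of the displayed pair, and the conditions on \(\tau\) (\(\tau(1)\ne1\); for \(t\ge2\), \(\tau\) fixes \(2,\ldots,t\) and at least two points \(>t+1\); for \(t=1\), \(\tau\) intersects \((1\ 2)\)) are precisely those making this pair \(t\)-cross-intersecting, not both inside a \(t\)-coset, of the right sizes with the first family at least as large as the second.

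I expect the main obstacle to be the endgame. Unlike in Theorem~\ref{thm:hiltonmilnertype}, where the extremal family is essentially unique, here there is a whole family of extremal pairs parametrised by \(\tau\), so one must (i) reduce the outside parts of both families simultaneously, running the Claim-type argument in both directions while tracking which point of \([t]\) each permutation shifts; (ii) turn the asymptotic derangement estimates into \emph{exact} identities, so as to force the inside parts to be the \emph{full} families \(\{\sigma\in\mathcal{C}:\sigma(j)=\tau(j)\ \exists j>t+1\}\) and \(\mathcal{E}\) rather than merely families of the right size; and (iii) carry out the finite combinatorial case analysis pinning down exactly which \(\tau\) occur, including the separate treatment of \(t=1\) and \(t\ge2\) and the verification that the resulting pair is indeed \(t\)-cross-intersecting. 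A secondary point is that the cross form of Lemma~\ref{lemma:stabilityhoffman}, fed only \(v_{\mathcal{A}}^{\top}Av_{\mathcal{B}}=0\), localises \emph{each} of \(v_{\mathcal{A}},v_{\mathcal{B}}\) near \(V_t\) only because \(|\mathcal{A}||\mathcal{B}|\ge|\mathcal{B}|^2\) is a constant proportion of \(((n-t)!)^2\); so the argument genuinely uses the lower bound on the smaller family.
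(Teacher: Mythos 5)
The paper does not actually prove this theorem: after stating it (and its product-form companion), it says only that ``the proofs are very similar to the proof of Theorem~\ref{thm:hiltonmilnertype}, and we omit them.'' So there is no proof in the paper against which to check you line by line. What can be said is that your plan is exactly the one the paper advertises: run the proof of Theorem~\ref{thm:hiltonmilnertype}, carrying the $\mathcal{A}$/$\mathcal{B}$ asymmetry throughout. Your cross-intersecting stability version of Hoffman's bound is the right adaptation of Lemma~\ref{lemma:stabilityhoffman} (and a necessary one, since neither $\mathcal{A}$ nor $\mathcal{B}$ need be $t$-intersecting on its own, so Theorem~\ref{thm:stability} cannot be applied directly to either). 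The Cauchy--Schwarz computation you sketch does yield $\|P_{V_t^{\perp}}v_{\mathcal{A}}\|^2\le(1-\mu+o(1))|\mathcal{A}|/n!$ with $\mu=|\mathcal{A}||\mathcal{B}|/((n-t)!)^2$, and since the assumption $\min(|\mathcal{A}|,|\mathcal{B}|)\ge(1-1/e+o(1))(n-t)!$ keeps $\mu$ bounded away from $0$, the transposition-graph and determinantal-formula machinery of Lemma~\ref{lemma:1coset} does indeed apply. Feeding the resulting large intersection $\mathcal{A}_{1\mapsto1,\ldots,t\mapsto t}$ into Theorem~\ref{thm:tcrossintersecting} against slices of $\mathcal{B}$, and then back again, is the correct cross analogue, and the inductive lift to a common $t$-coset is sound. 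The derangement identity giving $|\{\sigma\in\mathcal{C}:\sigma(j)=\rho(j)\ \exists j>t\}|=(n-t)!-d_{n-t}-d_{n-t-1}$ for every $\rho$ fixing exactly $t-1$ points of $[t]$ is also exactly right, and is what turns the asymptotic bound into the exact one. One small remark on a point you gloss over: the trivial bound $|\mathcal{H}|\le n!/m!$ for $m$-intersecting $\mathcal{H}$ is still usable on $\mathcal{B}\setminus\mathcal{C}$ even though $\mathcal{B}$ need not be $t$-intersecting, because you only use its contrapositive (a set larger than $n!/(\log n)!$ is not $(\log n)$-intersecting, so contains two permutations agreeing on at most $\log n$ points); you should make that explicit, since a careless reader might worry the bound presupposes some intersection property.

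The genuine soft spot, which you yourself flag, is the equality classification. Unlike Theorem~\ref{thm:hiltonmilnertype}, the extremal configuration here is a one-parameter family (the choice of $\tau$), and the Claim-type argument has to be run symmetrically on $\mathcal{A}\setminus\mathcal{C}$ and $\mathcal{B}\setminus\mathcal{C}$ while tracking which point of $[t]$ each outside permutation shifts. Your sketch of step (i) (forcing $\mathcal{A}\setminus\mathcal{C}$, up to relabelling, to be exactly $\{(i\ t+1):i\in[t]\}$ and $\mathcal{B}\setminus\mathcal{C}$ to be the conjugates $\{(1i)\tau(1i)\}$) is plausible, but you do not actually carry out the case analysis in (iii) that pins down which $\tau$ can occur; in particular, the assertion that the listed conditions on $\tau$ are ``precisely those making this pair $t$-cross-intersecting... of the right sizes'' is itself a nontrivial check (for instance, it requires verifying that $|\mathcal{A}|\ge|\mathcal{B}|$ for every admissible $\tau$, which constrains where $\tau^{-1}(1)$ can sit). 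Since the paper provides no proof to compare against, I cannot say whether the authors' omitted argument handles this differently, but any complete write-up will need to settle this endgame explicitly rather than by analogy.
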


\begin{theorem}
For \(n\) sufficiently large depending on \(t\), if \(\mathcal{A},\mathcal{B} \subset S_{n}\) are \(t\)-cross-intersecting but not both contained within the same \(t\)-coset, then
\[|\mathcal{A}||\mathcal{B}| \leq ((n-t)!-d_{n-t}-d_{n-t-1})((n-t)!+t)\]
with equality iff \((\mathcal{A},\mathcal{B})\) is isomorphic to the pair of families
\[\{\sigma \in S_{n}: \sigma(i)=i\ \forall i \leq t,\ \sigma(j)=j\ \textrm{for some}\ j > t+1\}\]
\[\{\sigma \in S_{n}: \sigma(i)=i\ \forall i \leq t\} \cup \{(1 \ t+1),(2\ t+1),\ldots,(t \ t+1)\}\]
\end{theorem}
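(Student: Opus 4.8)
The plan is to mirror the proof of Theorem~\ref{thm:hiltonmilnertype}, carrying the pair $(\mathcal{A},\mathcal{B})$ through the argument. Put $T=(n-t)!$, $E=(n-t)!-d_{n-t}-d_{n-t-1}=(1-1/e+o(1))T$, $\mathcal{C}_0=\{\sigma\in S_n:\sigma(i)=i\ \forall i\le t\}$ and $\mathcal{E}=\{\sigma\in\mathcal{C}_0:\sigma(j)=j\ \textrm{for some }j>t+1\}$, so $|\mathcal{C}_0|=T$, $|\mathcal{E}|=E$ and the claimed extremal pair is $(\mathcal{E},\ \mathcal{C}_0\cup\{(1\ t+1),\ldots,(t\ t+1)\})$, whose second member has size $T+t$. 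We may assume $|\mathcal{A}|\le|\mathcal{B}|$ and $|\mathcal{A}||\mathcal{B}|\ge E(T+t)$, and must prove $|\mathcal{A}||\mathcal{B}|\le E(T+t)$ with equality only in that configuration. First, $|\mathcal{B}|^{2}\ge|\mathcal{A}||\mathcal{B}|\ge E(T+t)$ gives $|\mathcal{B}|=\Theta((n-t)!)$; applying the cross-intersecting analogue of Theorem~\ref{thm:stability} (which I would prove exactly as Theorem~\ref{thm:stability} is proved, using Theorem~\ref{thm:weightedcrosshoffman} and a stability version of it in place of Theorem~\ref{thm:weightedhoffman} and Lemma~\ref{lemma:stabilityhoffman}, then running the argument of Lemma~\ref{lemma:1coset} for a cross-intersecting pair --- the $t$-general analogue of the corresponding step of \cite{cameronkupaper}) yields a $t$-coset $\mathcal{C}$ with $|\mathcal{A}\setminus\mathcal{C}|,|\mathcal{B}\setminus\mathcal{C}|=O((n-t-1)!)$. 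By a double translation of the pair fixing $\mathcal{C}$ we may take $\mathcal{C}=\mathcal{C}_0$. Since $|\mathcal{A}|,|\mathcal{B}|\le T+O((n-t-1)!)$, the product hypothesis forces $|\mathcal{A}|,|\mathcal{B}|\ge(1-1/e+o(1))T$, hence $|\mathcal{A}\cap\mathcal{C}|,|\mathcal{B}\cap\mathcal{C}|\ge(1-1/e+o(1))T$.

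I would then run the analogues of the two Claims from Theorem~\ref{thm:hiltonmilnertype}. \emph{Claim 1}: every permutation of $\mathcal{A}\setminus\mathcal{C}$, and of $\mathcal{B}\setminus\mathcal{C}$, moves exactly one point of $[t]$ --- for if some $\tau\in\mathcal{B}\setminus\mathcal{C}$ moved at least two points of $[t]$, every $\sigma\in\mathcal{A}\cap\mathcal{C}$ would agree with $\tau$ at $\ge2$ points of $\{t+1,\ldots,n\}$, forcing $|\mathcal{A}\cap\mathcal{C}|\le\binom{n-t}{2}(n-t-2)!=\tfrac{1}{2}T<(1-1/e+o(1))T$, a contradiction (symmetrically for $\mathcal{A}\setminus\mathcal{C}$, using $|\mathcal{B}\cap\mathcal{C}|$). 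Next I claim $\mathcal{A}\subseteq\mathcal{C}$. Otherwise pick $\tau\in\mathcal{A}\setminus\mathcal{C}$, which by Claim~1 moves exactly one point of $[t]$, and after a further double translation inside the setwise stabiliser of $\mathcal{C}$ take $\tau=(1\ t+1)$; then every $\sigma\in\mathcal{B}\cap\mathcal{C}$ $t$-intersects $\tau$ and fixes $[t]$, so fixes some point $>t+1$, whence $\mathcal{B}\cap\mathcal{C}\subseteq\mathcal{E}$ and $|\mathcal{B}|\le E+O((n-t-1)!)=(1+o(1))E$; since $|\mathcal{A}|\le|\mathcal{B}|$ this gives $|\mathcal{A}||\mathcal{B}|\le(1+o(1))E^{2}<E(T+t)$, contradicting the hypothesis. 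So $\mathcal{A}\subseteq\mathcal{C}$, and since $(\mathcal{A},\mathcal{B})$ is not both contained in $\mathcal{C}$, $\mathcal{B}\not\subseteq\mathcal{C}$. Choosing $\tau\in\mathcal{B}\setminus\mathcal{C}$ and taking $\tau=(1\ t+1)$ by the same translation, $t$-cross-intersection forces every $\sigma\in\mathcal{A}$ to fix a point $>t+1$, so $\mathcal{A}\subseteq\mathcal{E}$ and $|\mathcal{A}|\le E$.

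\emph{Claim 2}: $\mathcal{B}\setminus\mathcal{C}$ consists only of the transpositions $(i\ t+1)$, $i\in[t]$. Suppose some $\rho\in\mathcal{B}\setminus\mathcal{C}$ moves the point $l\in[t]$ and has $\rho(j)\neq j$ for some $j\ge t+2$. As in Theorem~\ref{thm:hiltonmilnertype}, the $d_{n-t-1}$ permutations of $\mathcal{E}$ fixing $j$ and disagreeing with the double fix $\rho_{j,l}$ at every point of $\{t+1,\ldots,n\}\setminus\{j\}$ all fail to $t$-intersect $\rho$, so lie outside $\mathcal{A}$; since $\mathcal{A}\subseteq\mathcal{E}$, $|\mathcal{A}|\le E-d_{n-t-1}$, and the product hypothesis then gives $|\mathcal{B}|\ge E(T+t)/(E-d_{n-t-1})\ge(T+t)(1+\Omega(1/n))$, so $|\mathcal{B}\setminus\mathcal{C}|\ge|\mathcal{B}|-T\ge\Omega((n-t-1)!)$. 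Then $\mathcal{B}\setminus\mathcal{C}$ is not $(\log n)$-intersecting (its size exceeds $n!/(\log n)!$), so it contains $\pi,\tau$ agreeing at at most $\log n$ points and moving points $p,q$ of $[t]$; each $\sigma\in\mathcal{A}$ then agrees at $\ge1$ point of $\{t+1,\ldots,n\}$ with each of the fixes $\pi_p,\tau_q$, both of which fix $[t]$, and since $\pi_p$ and $\tau_q$ agree only inside a set of size $\le\log n+O(1)$ this yields $|\mathcal{A}|\le((1-1/e)^{2}+o(1))T$, hence $|\mathcal{A}||\mathcal{B}|\le((1-1/e)^{2}+o(1))T^{2}<E(T+t)$, a contradiction. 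Therefore $|\mathcal{B}\setminus\mathcal{C}|\le t$, so $|\mathcal{B}|\le T+t$ and $|\mathcal{A}||\mathcal{B}|\le E(T+t)$. In the equality case $|\mathcal{A}|=E$ and $|\mathcal{B}|=T+t$; with $\mathcal{A}\subseteq\mathcal{E}$, $|\mathcal{B}\cap\mathcal{C}|\le T$ and $\mathcal{B}\setminus\mathcal{C}\subseteq\{(i\ t+1):i\in[t]\}$ this forces $\mathcal{A}=\mathcal{E}$, $\mathcal{B}\cap\mathcal{C}=\mathcal{C}$, $\mathcal{B}\setminus\mathcal{C}=\{(i\ t+1):i\in[t]\}$, i.e.\ $(\mathcal{A},\mathcal{B})$ is a double translate of the stated pair.

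The main obstacle is the first step: showing that \emph{both} $|\mathcal{A}|$ and $|\mathcal{B}|$ are $\Theta((n-t)!)$ and that both families are concentrated on one $t$-coset. Largeness of the product does not by itself give this, since Theorem~\ref{thm:tcrossintersecting} only yields $|\mathcal{A}||\mathcal{B}|\le((n-t)!)^{2}$ with no quantitative slack; one genuinely needs a stability version of Theorem~\ref{thm:weightedcrosshoffman}, applied to the weighted Cayley graph $Y$ of \cite{friedgutpilpel}, to conclude that $v_{\mathcal{A}}$ and $v_{\mathcal{B}}$ are both close to $V_t$, and then the representation-theoretic and combinatorial argument of Lemma~\ref{lemma:1coset} to transfer this to a single $t$-coset. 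A secondary source of care is the arithmetic of Claim~2: one must keep the lower-order $d_{n-t-1}$ terms in play so that both contradictions fire with room to spare, independently of the uncontrolled constant in the $O((n-t-1)!)$ error term coming from rough stability.
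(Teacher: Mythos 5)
Your proposal follows the route the paper itself signals (``the proofs are very similar to the proof of Theorem~\ref{thm:hiltonmilnertype}, and we omit them''), and the combinatorial core --- Claim~1 forcing each permutation of $\mathcal{A}\setminus\mathcal{C}$ and $\mathcal{B}\setminus\mathcal{C}$ to move exactly one point of $[t]$, the deduction that one of the two families must lie entirely in $\mathcal{C}$, and the analogue of Claim~2 ruling out any $\rho\in\mathcal{B}\setminus\mathcal{C}$ other than the $t$ transpositions $(i\ t+1)$ via the $d_{n-t-1}$-many excluded elements of $\mathcal{E}$ and the ``agree at separate points'' estimate --- is sound and correctly adapted. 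The arithmetic in both Claim~2 contradictions holds: $(1+o(1))E^{2}<E(T+t)$ and $((1-1/e)^{2}+o(1))T^{2}<E(T+t)$, and the lower-order $d_{n-t-1}$ term is retained where needed.

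The genuine gap is the one you flag, and it is somewhat more serious than the phrasing suggests. You assert $|\mathcal{B}|=\Theta((n-t)!)$, but $|\mathcal{B}|^{2}\ge E(T+t)$ only gives $|\mathcal{B}|=\Omega((n-t)!)$; nothing at hand prevents a priori a very lopsided pair, say $|\mathcal{A}|\ll(n-t)!$ and $|\mathcal{B}|\gg(n-t)!$ with large product. This matters twice over. First, a stability version of Theorem~\ref{thm:weightedcrosshoffman} applied with $v_{\mathcal{A}}^{\top}Av_{\mathcal{B}}=0$ only controls a symmetric combination of $\|P_{V_{t}^{\perp}}v_{\mathcal{A}}\|$ and $\|P_{V_{t}^{\perp}}v_{\mathcal{B}}\|$ and does not by itself bound each family's projection defect in isolation when the sizes are far apart. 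Second, the transfer from ``projection close to $V_t$'' to ``concentrated on a single $t$-coset'' via the machinery of Lemma~\ref{lemma:1coset} requires the set $\mathcal{S}$ built from $\mathcal{A}$ (or $\mathcal{B}$) to have size at least $n!/n^{t+2}$ for Maurey's inequality to be applicable, which demands $|\mathcal{A}|=\Omega((n-t-1)!)$; this does not follow from the hypothesis alone. So before the argument can proceed you must either prove that near-extremality for the product forces $|\mathcal{A}|$ and $|\mathcal{B}|$ to be within constant factors of one another, or run a case split on $|\mathcal{A}|$ small versus large and dispatch the small case by a separate direct argument. The paper's stated rough stability (Theorem~\ref{thm:stability}) is only for $t$-intersecting families, and a cross-intersecting analogue covering this size-asymmetry is precisely what is missing and must be supplied.
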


The proofs are very similar to the proof of Theorem \ref{thm:hiltonmilnertype}, and we omit them.

\section{The Alternating Group} \label{section:alternating}
We now turn our attention to the alternating group \(A_{n}\), the index-2 subgroup of \(S_{n}\) consisting of the even permutations of \(\{1,2,\ldots,n\}\). The following may be deduced from the proof of the Deza-Frankl conjecture in \cite{friedgutpilpel}:

\begin{theorem} \label{thm:alternatingdezafrankl}
For \(n\) sufficiently large depending on \(t\), if \(\mathcal{A} \subset A_{n}\) is \(t\)-intersecting, then \(|\mathcal{A}| \leq (n-t)!/2\).
\end{theorem}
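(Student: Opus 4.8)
The plan is to transfer the eigenvalue / weighted-Hoffman argument of \cite{friedgutpilpel} from \(S_n\) to \(A_n\). Two permutations \(\sigma,\tau\in A_n\) \(t\)-intersect precisely when \(\sigma^{-1}\tau\) has at least \(t\) fixed points, and \(\sigma^{-1}\tau\in A_n\); hence a \(t\)-intersecting family \(\mathcal{A}\subset A_n\) is an independent set in the Cayley graph on \(A_n\) whose connection set is the set \(S\) of even permutations with fewer than \(t\) fixed points (a symmetric union of conjugacy classes, not containing the identity once \(n>t\)), and, more generally, in any real-weighted Cayley graph on \(A_n\) supported on \(S\). So it suffices to produce such a weighted Cayley graph whose adjacency matrix \(A'\) satisfies the hypotheses of Theorem \ref{thm:weightedhoffman} with largest eigenvalue \(1\) (the all-ones vector being the corresponding eigenvector) and smallest eigenvalue \(\omega_{n,t}\); the theorem then gives \(|\mathcal{A}|\le\frac{|\omega_{n,t}|}{1+|\omega_{n,t}|}\cdot\frac{n!}{2}=(n-t)!/2\).

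To build \(A'\) I would re-run the spectral construction of \cite{friedgutpilpel}, now over the even conjugacy classes. For a class function \(w\) on \(A_n\) supported on \(S\), the associated weighted normal Cayley graph has, on the isotypic component of \(\mathbb{C}[A_n]\) indexed by a conjugate pair \(\{\alpha,\alpha'\}\) of partitions of \(n\), the eigenvalue \(\eta_\alpha=\tfrac1{f^\alpha}\sum_c w_c|c|\chi_\alpha(c)\); this is well defined and symmetric in \(\alpha,\alpha'\) because \(\chi_\alpha=\chi_{\alpha'}\) on even permutations and \(f^\alpha=f^{\alpha'}\) (self-conjugate \(\alpha\) are non-fat and will only enter the error term, so their splitting is harmless). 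For \(n\) large every fat partition \(\alpha\) has \(\alpha\ne\alpha'\) with \(\alpha'\) tall and thin, hence not fat, so fat partitions pair off with non-fat ones under transposition, and the requirement on \(A'\) becomes: \(\eta_{(n)}=1\), \(\eta_\alpha=\omega_{n,t}\) for fat \(\alpha\ne(n)\), and \(|\eta_\alpha|=o(|\omega_{n,t}|)\) for all remaining conjugate pairs. This is a linear system in the weights with the same finite number \(|\mathcal{F}_{n,t}|\) of exact constraints (independent of \(n\) for \(n\ge 2t\)) as in \cite{friedgutpilpel}, plus the requirement that the other eigenvalues be small; since the character estimates used there restrict to \(A_n\) with the same orders of magnitude, the same argument produces a weighting with the desired spectrum. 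Equivalently, one may start from their weighting \(w\colon S_n\to\mathbb{R}\), pass to its even part — which gives \(\eta_\alpha=\tfrac12(\lambda_\alpha+\lambda_{\alpha'})\), already of the right order in each regime — and then adjust the finitely many free parameters to meet the equality constraints exactly.

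Given \(A'\), the conclusion follows at once from Theorem \ref{thm:weightedhoffman} with \(X=\mathcal{A}\). I expect the main obstacle to be producing \(A'\) with the eigenvalues \(1\) and \(\omega_{n,t}\) attained \emph{exactly}: the smallest eigenvalue must be \(\ge\omega_{n,t}\) for the bound to come out as \((n-t)!/2\) rather than \((1+o(1))(n-t)!/2\) (a naive restriction of the \(S_n\) graph to \(A_n\) only gives the latter), so one genuinely has to check that the finite linear system over the even classes is unobstructed. I would also remark that this is exactly where the method stalls for the extremal characterisation: the \(\omega_{n,t}\)-eigenspace of \(A'\) necessarily contains the isotypic components of the tall-and-thin transposes of the fat partitions, so it strictly contains the span of the characteristic vectors of the even parts of the \(t\)-cosets; hence the equality clause of Theorem \ref{thm:weightedhoffman} does not force \(\mathcal{A}\) to be half of a \(t\)-coset, and the extremal families in \(A_n\) have to be pinned down instead by a stability argument in the style of Theorem \ref{thm:hiltonmilnertype}.
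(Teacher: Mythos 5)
Your plan is workable in principle, but it misses the observation that makes the theorem essentially immediate, and it reconstructs from scratch work that \cite{friedgutpilpel} has already done. As the paper recalls at the start of Section \ref{section:alternating}, \cite{friedgutpilpel} already constructs a weighted graph \(Y_{\textrm{even}}\) on \(S_n\): a real linear combination of Cayley graphs generated by conjugacy classes of \emph{even} permutations with fewer than \(t\) fixed points, whose matrix of weights has maximum eigenvalue exactly \(1\) and minimum eigenvalue exactly \(\omega_{n,t}\). Thus the linear system "over the even classes" that you identify as the main obstacle has already been solved there, and you need not re-run the spectral argument or "adjust the finitely many free parameters." The paper's proof is then a one-line reduction: since the connection set of \(Y_{\textrm{even}}\) lies in \(A_n\), there are no edges between \(A_n\) and \(S_n\setminus A_n\), so \(Y_{\textrm{even}}\) is the disjoint union of its induced subgraph \(Y_1\) on \(A_n\) and \(Y_2\) on \(S_n\setminus A_n\); the translation \(\sigma\mapsto(1\,2)\sigma\) is a weighted-graph isomorphism \(Y_1\to Y_2\) (edge weights depend only on the conjugacy class of \(\sigma\pi^{-1}\), and \((1\,2)\sigma\pi^{-1}(1\,2)\) is conjugate to \(\sigma\pi^{-1}\)), so \(Y_1\) has exactly the spectrum of \(Y_{\textrm{even}}\) with halved multiplicities, and Theorem \ref{thm:weightedhoffman} applied to \(Y_1\) gives \(|\mathcal{A}|\le\frac{|\omega_{n,t}|}{1+|\omega_{n,t}|}\cdot\frac{n!}{2}=(n-t)!/2\). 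Your direct construction of a Cayley graph on \(A_n\) via \(A_n\)-character theory, with the pairing of \(\alpha\) and \(\alpha'\) and the observation that self-conjugate partitions are non-fat and only contribute to the error term, is sound in outline and would yield the same conclusion — it is simply a longer road to a graph that is, in effect, already available.

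One side remark in your proposal is incorrect, although it does not affect the bound. You assert that the \(\omega_{n,t}\)-eigenspace ``necessarily contains the isotypic components of the tall-and-thin transposes of the fat partitions, so it strictly contains the span of the characteristic vectors of the even parts of the \(t\)-cosets.'' But for \(\alpha\ne\alpha'\) the restrictions \(S^\alpha|_{A_n}\) and \(S^{\alpha'}|_{A_n}\) are the \emph{same} irreducible of \(A_n\), so the conjugate pair \(\{\alpha,\alpha'\}\) contributes a single isotypic component of \(\mathbb{C}[A_n]\), not two; there is nothing extra coming from the transposes. A dimension count confirms this: the \(1\)-plus-\(\omega_{n,t}\)-eigenspace of \(Y_1\) has dimension \(\sum_{\textrm{fat }\alpha}(f^\alpha)^2\), which is exactly \(\dim W_t\) (the restriction map \(V_t\to\mathbb{C}[A_n]\) is injective, since a function in \(V_\alpha\) with \(\alpha\ne\alpha'\) supported on odd permutations would lie in \(V_\alpha\cap V_{\alpha'}=\{0\}\)), and indeed the paper states that \(W_t\) \emph{is} the direct sum of the \(1\)- and \(\omega_{n,t}\)-eigenspaces. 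The actual reason the equality case cannot be settled by the method of \cite{friedgutpilpel} is the one the paper gives: there exist non-negative functions in \(W_1\) that are not non-negative linear combinations of characteristic functions of \(1\)-cosets of \(A_n\), so the positivity step breaks down. You correctly anticipated that the extremal characterization must instead come from the stability theorem, but for a different reason than the one you offered.
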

\noindent \textit{Remark:} This implies the Deza-Frankl conjecture. To see this, let \(\mathcal{A} \subset S_{n}\) be \(t\)-intersecting; then \(\mathcal{A} \cap A_{n}\) and \((\mathcal{A} \setminus A_{n})(1\ 2)\) are both \(t\)-intersecting families of permutations in \(A_{n}\), so by Theorem \ref{thm:alternatingdezafrankl}, both have size at most \((n-t)!/2\). Hence,
\[|\mathcal{A}| = |\mathcal{A} \cap A_{n}| + |\mathcal{A} \setminus A_{n}| \leq (n-t)!\]
\begin{proof}
Recall that in \cite{friedgutpilpel}, we constructed a weighted graph \(Y_{\textrm{even}}\) which was a real linear combination of Cayley graphs on \(S_{n}\) generated by conjugacy-classes of \textit{even} permutations with less than \(t\) fixed points, and whose matrix of weights had maximum eigenvalue \(1\) and minimum eigenvalue
\[\omega_{n,t} = -\frac{1}{n(n-1)\ldots(n-t+1)-1}\]
Clearly, \(Y_{\textrm{even}}\) has no (non-zero) edges between \(A_{n}\) and \(S_{n} \setminus A_{n}\). Let \(Y_{1}\) be the weighted subgraph of \(Y_{\textrm{even}}\) induced on \(A_{n}\), and \(Y_{2}\) the weighted subgraph induced on \(S_{n} \setminus A_{n}\). Notice that the map
\begin{eqnarray*}
\phi: A_{n} & \to & S_{n} \setminus A_{n};\\
\sigma & \mapsto & (1\ 2)\sigma
\end{eqnarray*}
is a graph isomorphism from \(Y_{1}\) to \(Y_{2}\). To see this, note that
\[\phi(\sigma)(\phi(\pi))^{-1}= ((1\ 2) \sigma) ((1 \ 2)\pi)^{-1} = (1\ 2)\sigma \pi^{-1}(1 \ 2)\]
which is conjugate to \(\sigma \pi^{-1}\). Since \(Y_{\textrm{even}}\) is a linear combination of Cayley graphs generated by conjugacy-classes of \(S_{n}\), the edge \(\phi(\sigma) \phi(\pi)\) has the same weight in \(Y_{\textrm{even}}\) as the edge \(\sigma \pi\). Hence, \(Y_{\textrm{even}}\) is a disjoint union of the two isomorphic subgraphs \(Y_{1}\) and \(Y_{2}\), so the eigenvalues of \(Y_{\textrm{even}}\) are the same as those of \(Y_{1}\) (with double the multiplicities). Applying Theorem \ref{thm:weightedhoffman} to \(Y_{1}\) proves Theorem \ref{thm:alternatingdezafrankl}.
\end{proof}

Our next aim is to show that equality holds in Theorem \ref{thm:alternatingdezafrankl} only if \(\mathcal{A}\) is a coset of the stabilizer of \(t\) points. As for \(S_{n}\), we will call these families the `\(t\)-cosets of \(A_{n}\)'.

Let \(W_{t}\) be the subspace of \(\mathbb{C}[A_{n}]\) spanned by the characteristic vectors of the \(t\)-cosets of \(A_{n}\). It is easily checked that \(W_{t}\) is the direct sum of the 1 and \(\omega_{n,t}\)-eigenspaces of \(Y_{1}\). Hence, by Theorem \ref{thm:weightedhoffman}, if equality holds in Theorem \ref{thm:alternatingdezafrankl}, then the characteristic vector \(v_{\mathcal{A}}\) of \(\mathcal{A}\) lies in the subspace \(W_{t}\).\\

We would like to show that the Boolean functions which are linear combinations of the characteristic functions of the \(t\)-cosets of \(A_{n}\) are precisely the characteristic functions of the disjoint unions of \(t\)-cosets of \(A_{n}\). To do this for \(S_{n}\) in \cite{friedgutpilpel}, it was first proved that if a non-negative function \(f:S_{n} \to \mathbb{R}_{\geq 0}\) is a linear combination of the characteristic functions of the \(t\)-cosets of \(S_{n}\), then it can be expressed as a linear combination of them with non-negative coefficients. However, this is not true in the case of \(A_{n}\), even for \(t=1\):\\
\\
\textit{Claim:} There exists a non-negative function in \(W_{1}\) which cannot be written as a non-negative linear combination of the characteristic functions of the 1-cosets of \(A_{n}\).\\
\\
\textit{Proof of Claim:} Let \(w_{i \mapsto j}\) be the characteristic function of the \(1\)-coset \(\{\sigma \in A_{n}:\ \sigma(i)=j\}\). We say a real \(n \times n\) matrix \(B\) \textit{represents} a function \(f \in W_{1}\) if \(f\) can be written as a linear combination of \(w_{i \mapsto j}\)'s with coefficients given by the matrix \(B\), i.e.
\[f= \sum_{i,j=1}^{n} b_{i,j}w_{i \mapsto j}\]
or equivalently,
\[f(\sigma) = \sum_{i=1}^{n} b_{i,\sigma(i)}\quad \forall \sigma \in A_{n}\]
It is easy to see that, provided \(n \geq 4\), any function \(f \in W_{1}\) has a unique extension to a function \(\tilde{f} \in V_{1}\). Hence, if \(B\) and \(C\) are two matrices both representing \(f\), they must both represent the same function \(\tilde{f}:S_{n} \to \mathbb{R}\), and therefore
\[\sum_{i=1}^{n} b_{i,\sigma(i)} = \sum_{i=1}^{n}c_{i, \sigma(i)} \quad \forall \sigma \in S_{n}\]
Now let \(f\) be the function represented by the matrix
\begin{displaymath}
B = \left(\begin{array}{cccccc}1 & -1/2 & 1 & 1 & \ldots & 1\\
-1/2 & 1 & 1 & 1 & \ldots & 1\\
1 & 1 & 0 & 1 & \ldots & 1\\
\vdots & \vdots & & \ddots && \vdots\\
1 & 1 & \ldots &&& 0\end{array}\right)\end{displaymath}
This takes only non-negative values on \(A_{n}\), since
\[\sum_{i=1}^{n} b_{i,\sigma(i)} \geq 0\quad \forall \sigma \in A_{n}\]
but if \(\tau\) is the transposition \((1\ 2)\), then
\[\sum_{i=1}^{n}b_{i,\tau(i)} = -1\]
Hence, any matrix \(C\) representing the same function as \(B\) must also have
\[\sum_{i=1}^{n}c_{i,\tau(i)} = -1\]
and therefore cannot have non-negative entries. Therefore, \(f\) is a non-negative function in \(W_{1}\) that cannot be written as a non-negative linear combination of the \(w_{i \mapsto j}\)'s, proving the claim. 

Instead, we obtain our desired characterization of equality in Theorem \ref{thm:alternatingdezafrankl} from a stability result for \(t\)-intersecting families in \(A_{n}\).

Let \(e_{n},o_{n}\) denote the number of respectively even/odd derangements of \([n]\). It is well known that \(e_{n}-o_{n} = (-1)^{n-1}(n-1)\ \forall n \in \mathbb{N}\); combining this with the fact that \(d_{n} = (1/e+o(1))n!\) gives \(e_{n} = (1/(2e) + o(1))n!,\ o_{n} = (1/(2e) + o(1))n!\).

We now prove the following analogue of Theorem \ref{thm:hiltonmilnertype}:

\begin{theorem}\label{thm:hiltonmilnertypeAn}
For \(n\) sufficiently large depending on \(t\), if \(\mathcal{A} \subset A_{n}\) is a \(t\)-intersecting family which is not contained within a \(t\)-coset of \(A_{n}\), then \(\mathcal{A}\) cannot be larger than the family
\begin{eqnarray*}
\mathcal{B}& = &\{\sigma \in A_{n}: \sigma(i)=i\ \forall i \leq t,\ \sigma(j)=(n-1\ n)(j)\ \textrm{for some}\ j > t+1\} \\
&&\cup \{(1\ t+1)(n-1\ n),(2\ t+1)(n-1\ n),\ldots,(t \ t+1)(n-1\ n)\}
\end{eqnarray*}
which has size \((n-t)!/2-o_{n-t}-o_{n-t-1}+t = (1-1/e+o(1))(n-t)!/2\). If \(\mathcal{A}\) is the same size as \(\mathcal{B}\), then \(\mathcal{A}\) is a double translate of \(\mathcal{B}\), meaning that \(\mathcal{A} = \pi \mathcal{B} \tau\) for some \(\pi,\tau \in A_{n}\).
\end{theorem}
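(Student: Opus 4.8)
\section*{Proof proposal for Theorem~\ref{thm:hiltonmilnertypeAn}}

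The plan is to run the proof of Theorem~\ref{thm:hiltonmilnertype} essentially verbatim, carrying the parity along at every stage; the only genuinely new ingredient needed is a rough stability statement for $A_n$, and this follows at once from Theorem~\ref{thm:stability}. Indeed, if $\mathcal{A}\subseteq A_n$ is $t$-intersecting with $|\mathcal{A}|\geq c(n-t)!/2$ for a constant $c>0$, then, viewing $\mathcal{A}$ as a $t$-intersecting subset of $S_n$ of size at least $(c/2)(n-t)!$, Theorem~\ref{thm:stability} gives a $t$-coset $\mathcal{C}$ of $S_n$ with $|\mathcal{A}\setminus\mathcal{C}|=O((n-t-1)!)$; and for $n-t\geq 2$ the set $\mathcal{C}\cap A_n$ is a $t$-coset of $A_n$, so (since $\mathcal{A}\subseteq A_n$) we obtain a $t$-coset $\mathcal{C}':=\mathcal{C}\cap A_n$ of $A_n$ with $|\mathcal{A}\setminus\mathcal{C}'|=O((n-t-1)!)$.

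Now suppose $\mathcal{A}\subseteq A_n$ is $t$-intersecting, not contained in a $t$-coset of $A_n$, and satisfies $|\mathcal{A}|\geq|\mathcal{B}|=(n-t)!/2-o_{n-t}-o_{n-t-1}+t=(1-1/e+o(1))(n-t)!/2$. Write $\mathcal{B}_0:=\{\sigma\in A_n:\sigma(i)=i\ \forall i\leq t,\ \sigma(j)=(n-1\ n)(j)\text{ for some }j>t+1\}$, so that $\mathcal{B}=\mathcal{B}_0\cup\{(i\ t+1)(n-1\ n):i\in[t]\}$ and $|\mathcal{B}_0|=(n-t)!/2-o_{n-t}-o_{n-t-1}$, the quantity $o_{n-t}+o_{n-t-1}$ being the number of odd permutations of $\{t+1,\dots,n\}$ having no fixed point in $\{t+2,\dots,n\}$ (split on whether $t+1$ is fixed). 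Applying the rough stability statement with some constant $c<1-1/e$ and normalizing by a double translation within $A_n$ (possible since $n$ is large), we may assume $\mathcal{C}'=\mathcal{C}_0:=\{\sigma\in A_n:\sigma(i)=i\ \forall i\in[t]\}$, whence $|\mathcal{A}\cap\mathcal{C}_0|\geq(1-1/e+o(1))(n-t)!/2$. Exactly as for $S_n$, the first step is to show that every permutation in $\mathcal{A}\setminus\mathcal{C}_0$ fixes exactly $t-1$ points of $[t]$: if some $\tau\in\mathcal{A}$ fixed at most $t-2$ points of $[t]$, then every member of $\mathcal{A}\cap\mathcal{C}_0$ would agree with $\tau$ at two points of $\{t+1,\dots,n\}$, so $|\mathcal{A}\cap\mathcal{C}_0|\leq\binom{n-t}{2}\cdot\tfrac12(n-t-2)!=\tfrac14(n-t)!$, contradicting the lower bound since $\tfrac14<(1-1/e)/2$. (The extra factor $\tfrac12$ here, counting only even permutations, is exactly what is needed: in $S_n$ one compares $\tfrac12$ with $1-1/e$, in $A_n$ one compares $\tfrac14$ with $(1-1/e)/2$.)

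Since $\mathcal{A}$ is not inside a $t$-coset, $\mathcal{A}\setminus\mathcal{C}_0$ contains some $\rho_0$, which shifts exactly one point of $[t]$; after a further double translation within $A_n$ fixing $\mathcal{C}_0$ we may assume $\rho_0$ fixes $2,\dots,t$ and $\rho_0(1)=t+1$. The main work, mirroring the Claim in the proof of Theorem~\ref{thm:hiltonmilnertype}, is to show that $\mathcal{A}\setminus\mathcal{C}_0$ consists of exactly the $t$ ``twisted transpositions'' $(i\ t+1)(n-1\ n)$, $i\in[t]$. One first checks, by an inclusion--exclusion argument in which the ``forbidden positions'' arising from $t$-intersecting $\rho_0$ form a partial permutation matrix (and a careful check of the parity of the corresponding permutations), that $\mathcal{A}\cap\mathcal{C}_0\subseteq\mathcal{B}_0$, and indeed that $\rho_0$ may be taken to equal $(1\ t+1)(n-1\ n)$ itself. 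Next, if $\mathcal{A}\setminus\mathcal{C}_0$ contained some $\rho$ moving a point $j\geq t+2$, one exhibits $\Theta((n-t-1)!)$ members of $\mathcal{B}_0$ that disagree with $\rho$ at every point of $\{t+1,\dots,n\}$, and hence cannot $t$-intersect $\rho$ and so lie outside $\mathcal{A}$: these are built by a ``fix'' operation on $\rho$ relative to $(n-1\ n)$ (rather than relative to the identity, because membership of $\mathcal{B}_0$ is defined by agreeing with $(n-1\ n)$ somewhere past $t+1$), using the $i$-fix $\rho_l$ at the unique point $l\in[t]$ moved by $\rho$ to restore the fixing of $[t]$, and the count comes out as an odd- or even-derangement number $o_{n-t-1}$ or $e_{n-t-1}=(1/(2e)+o(1))(n-t-1)!$ according to a parity that must be tracked. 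This forces $|\mathcal{A}\cap\mathcal{C}_0|\leq|\mathcal{B}_0|-\Theta((n-t-1)!)$, so $|\mathcal{A}\setminus\mathcal{C}_0|\geq\Theta((n-t-1)!)$; since an $m$-intersecting family in $A_n$ has size at most $n!/(2\,m!)=o((n-t-1)!)$ for $m=\lceil\log n\rceil$, the family $\mathcal{A}\setminus\mathcal{C}_0$ is not $(\log n)$-intersecting and so contains two permutations agreeing at at most $\log n$ points; transferring their $t$-intersections with $\mathcal{A}\cap\mathcal{C}_0$ into $\mathcal{C}_0$ and estimating gives $|\mathcal{A}\cap\mathcal{C}_0|\leq((1-1/e)^2+o(1))(n-t)!/2<(1-1/e+o(1))(n-t)!/2$, a contradiction (note $(1-1/e)^2<1-1/e$, and the halving occurs on both sides). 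This proves the claim; then $|\mathcal{A}|\geq|\mathcal{B}|=|\mathcal{B}_0|+t$, $\mathcal{A}\cap\mathcal{C}_0\subseteq\mathcal{B}_0$ and $\mathcal{A}\setminus\mathcal{C}_0\subseteq\{(i\ t+1)(n-1\ n):i\in[t]\}$ force equality everywhere, so $\mathcal{A}=\mathcal{B}$, and running the argument on an arbitrary double translate of $\mathcal{A}$ yields the ``double translate'' conclusion.

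I expect the main obstacle to lie entirely in the parity and twist bookkeeping, not in any new combinatorial idea: the exact value of the maximum picks up odd-derangement corrections, the extremal family must be twisted by $(n-1\ n)$ so as to lie in $A_n$, the relevant ``fix'' operation has to be taken relative to $(n-1\ n)$ and changes parity (so one must verify both that the permutations excluded from $\mathcal{A}$ are even and that their number is $\Theta((n-t-1)!)$ of the correct parity), and every numerical inequality has to be re-checked after the systematic halving. The single most delicate point is establishing that some member of $\mathcal{A}\setminus\mathcal{C}_0$ actually has the cycle type of a twisted transposition rather than, say, a $3$-cycle or a product of two transpositions supported elsewhere; this is where the partial-permutation-matrix computation of $|\mathcal{B}_0|$, together with a careful parity analysis of the ``extension'' of $\rho_0$'s matrix to a full permutation, is needed.
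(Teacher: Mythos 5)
Your proposal follows essentially the same route as the paper's proof: apply Theorem~\ref{thm:stability} to get rough stability for \(A_n\), show that every member of \(\mathcal{A}\setminus\mathcal{C}_0\) fixes exactly \(t-1\) points of \([t]\) (with the \(\tfrac14(n-t)!\) vs.\ \((1-1/e)(n-t)!/2\) comparison), normalize a witness \(\rho_0\) by double translation, deduce \(\mathcal{A}\cap\mathcal{C}_0\subseteq\mathcal{B}_0\), rule out any \(\rho\in\mathcal{A}\setminus\mathcal{C}_0\) other than the \(t\) twisted transpositions by exhibiting \(\Theta((n-t-1)!)\) members of \(\mathcal{B}_0\) that cannot \(t\)-intersect \(\rho\) (the paper's bound is \(\min(e_{n-t-1},o_{n-t-1})\)), and then use the \((\log n)\)-intersection argument exactly as in Theorem~\ref{thm:hiltonmilnertype}, with all constants halved. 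You correctly identify the parity bookkeeping as the principal new feature.

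One small misdiagnosis: you flag as the ``single most delicate point'' showing that \(\rho_0\) can be taken to have the cycle type of a twisted transposition, and invoke an inclusion--exclusion / partial-permutation-matrix computation plus a parity analysis of ``extending'' \(\rho_0\). In fact the paper handles this in one stroke: having shown \(\rho_0\) fixes exactly \(t-1\) points of \([t]\), it double-translates by \(\pi,\rho\in A_n\) with \(\pi,\rho\) preserving \(\mathcal{C}_0\) (i.e.\ \(\rho\) permutes \([t]\) and \(\pi\) inverts it there). Such a double translation can send any even permutation fixing exactly \(t-1\) points of \([t]\) to \((1\ t+1)(n-1\ n)\) directly --- the only thing to check is that one can choose \(\rho\) of the correct parity, which needs \(n-t\geq 4\), say. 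Once \(\rho_0=(1\ t+1)(n-1\ n)\), the inclusion \(\mathcal{A}\cap\mathcal{C}_0\subseteq\mathcal{B}_0\) is immediate from \(t\)-intersection with \(\rho_0\); no inclusion--exclusion is needed, and the order of these two steps is the reverse of what you describe. The genuinely parity-sensitive step is the one you also identify: the derangement count of excluded permutations, where the paper conservatively uses \(\min(e_{n-t-1},o_{n-t-1})\) rather than tracking which parity actually occurs.
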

\begin{proof}
Let \(\mathcal{A} \subset A_{n}\) be a \(t\)-intersecting family which is not contained within a \(t\)-coset of \(A_{n}\) and has size
\[|\mathcal{A}| \geq (n-t)!/2-o_{n-t}-o_{n-t-1}+t = (1-1/e+o(1))(n-t)!/2.\]
Applying Theorem \ref{thm:stability} with any constant \(c\) such that \(0 < c < (1-1/e)/2\), we see that (provided \(n\) is sufficiently large) there exists a \(t\)-coset \(\mathcal{C}\) such that
\[|\mathcal{A} \setminus \mathcal{C}| \leq O(1/n)(n-t)!\]
By double translation, without loss of generality we may assume that \(\mathcal{C} = \{\sigma \in A_{n}: \sigma(1)=1,\ldots,\sigma(t)=t\}\). We have:
\begin{eqnarray}
\label{eq:alternatingboundC}
|\mathcal{A} \cap \mathcal{C}| & \geq &(n-t)!/2-o_{n-t}-o_{n-t-1}+t-O(1/n)(n-t)! \nonumber \\
&=& (1-1/e+o(1))(n-t)!/2
\end{eqnarray}

We now claim that every permutation in \(\mathcal{A}\setminus \mathcal{C}\) fixes exactly \(t-1\) points of \([t]\). Suppose for a contradiction that \(\mathcal{A}\) contains a permutation \(\tau\) fixing at most \(t-2\) points of \([t]\). Then every permutation in \(\mathcal{A} \cap \mathcal{C}\) must agree with \(\tau\) on at least 2 points of \(\{t+1,\ldots,n\}\), so
\[|\mathcal{A} \cap \mathcal{C}| \leq {n-t \choose 2}(n-t-2)!/2 = \tfrac{1}{2}(n-t)!/2\]
contradicting (\ref{eq:alternatingboundC}), provided \(n\) is sufficiently large.

Since we are assuming that \(\mathcal{A}\) is not contained within a \(t\)-coset, \(\mathcal{A} \setminus \mathcal{C}\) contains some permutation \(\tau\); \(\tau\) must fix all points of \([t]\) except for one. By double translation, we may assume that \(\tau = (1\ t+1)(n-1\ n)\). We will show that under these hypotheses, \(\mathcal{A}=\mathcal{B}\). Every permutation in \(\mathcal{A} \cap \mathcal{C}\) must agree with \((n-1\ n)\) at some point \(\geq t+2\), i.e. \(\mathcal{A} \cap \mathcal{C}\) is a subset of the family
\[\mathcal{E}:=\{\sigma \in A_{n}: \sigma(i)=i \ \forall i \in [t], \ \sigma(j)=(n-1\ n)(j) \textrm{ for some } j \geq t+2\}\]
which has size
\[(n-t)!/2-o_{n-t}-o_{n-t-1}\]
We now make the following observation:\\
\\
\textit{Claim:} \(\mathcal{A} \setminus \mathcal{C}\) may only contain the permutations \(\{(i\ t+1)(n-1\ n):\ i \in [t]\}\).\\
\\
\textit{Proof of Claim:}\\
Suppose for a contradiction that \(\mathcal{A} \setminus \mathcal{C}\) contains a permutation \(\rho\) not of this form. Then \(\rho(j) \neq (n-1\ n)(j)\) for some \(j \geq t+2\), so by a very similar argument to in the proof of Theorem \ref{thm:hiltonmilnertype}, there are at least \(\min(e_{n-t-1},o_{n-t-1})\) even permutations which fix \(1,2,\ldots,t\) and agree with \((n-1 \ n)\) at \(j\) (and are therefore in \(\mathcal{E}\)) and also disagree with \(\rho\) at all points of \(\{t+1,t+2,\ldots,n\}\setminus\{j\}\). Since \(\rho\) has exactly \(t-1\) fixed points in \([t]\), none of these permutations can \(t\)-intersect \(\rho\), and therefore
\begin{eqnarray*}
|\mathcal{A} \cap \mathcal{C}| &\leq& |\mathcal{E}| - \min(e_{n-t-1},o_{n-t-1})\\
& =& (n-t)!-o_{n-t}-o_{n-t-1}-\min(e_{n-t-1},o_{n-t-1})
\end{eqnarray*}
Since we are assuming that \(|\mathcal{A}| \geq (n-t)!-o_{n-t}-o_{n-t-1}+t\), this means that
\[|\mathcal{A} \setminus \mathcal{C}| \geq \min(e_{n-t-1},o_{n-t-1})+t = (1/e+o(1))(n-t-1)!/2\]
Notice that for any \(m < n\) we have the following trivial upper bound on the size of an \(m\)-intersecting family \(\mathcal{H} \subset A_{n}\):
\[|\mathcal{H}| \leq {n \choose m}(n-m)!/2 = n!/(2m!)\]
since every permutation in \(\mathcal{H}\) must agree with a fixed permutation in \(\mathcal{H}\) in at least \(m\) places.

Hence, \(\mathcal{A}\setminus \mathcal{C}\) cannot be \((\log n)\)-intersecting and therefore contains two permutations \(\pi,\tau\) agreeing on at most \(\log n\) points. The number of permutations in \(\mathcal{C}\) which agree with \(\pi\) and \(\tau\) at one of these \(\log n\) points is clearly at most \((\log n)(n-t-1)!/2\). All other permutations in \(\mathcal{A} \cap \mathcal{C}\) agree with \(\pi\) and \(\tau\) at two separate points of \(\{t+1,\ldots,n\}\), and therefore the same holds for \(\pi_{p}\) and \(\tau_{q}\), where \(p\) and \(q\) are the unique points of \([t]\) shifted by \(\pi\) and \(\tau\) respectively. The number of permutations in \(\mathcal{C}\) that agree with \(\pi_{p}\) and \(\tau_{q}\) at two separate points of \(\{t+1,\ldots,n\}\) is at most \(((1-1/e)^{2} + o(1))(n-t)!/2\) (it is easily checked that given two fixed permutations, the probability that a uniform random even permutation agrees with them at separate points is at most \((1-1/e)^{2} + o(1)\)), which implies that
\begin{eqnarray*}
|\mathcal{A} \cap \mathcal{C}|& \leq& ((1-1/e)^{2} + o(1))(n-t)!/2 + (\log n) (n-t-1)!/2 \\
&=& ((1-1/e)^{2}+o(1))(n-t)!/2
\end{eqnarray*}
contradicting (\ref{eq:alternatingboundC}), provided \(n\) is sufficiently large. This proves the claim.

Since we are assuming \(|\mathcal{A}| \geq |\mathcal{E}| + t\), we must have equality, so \(\mathcal{A}=\mathcal{B}\), proving Theorem \ref{thm:hiltonmilnertypeAn}.
\end{proof}


\end{document}